\newtheorem{proposition}{Proposition}[section]
\newtheorem{theorem}[proposition]{Theorem}
\newtheorem{corollary}[proposition]{Corollary}
\newtheorem{lemma}[proposition]{Lemma}
\newtheorem{remark}[proposition]{Remark}
\newtheorem{example}[proposition]{Example}
\newcommand{\nc}{\newcommand}
\nc{\I}{{\mathbf 1}}
\nc{\bN}{{\mathbf N}}
\nc{\bM}{{\mathbf M}}
\nc{\cB}{{\mathcal B}}
\nc{\cL}{{\mathcal L}}
\nc{\R}{{\mathbb R}}
\nc{\C}{{\mathbb C}}
\nc{\N}{{\mathbb N}}
\nc{\Z}{{\mathbb Z}}
\nc{\te}{{\mathrm{e}}}
\nc{\ti}{{\mathrm{i}}}
\nc{\D}{{\text{d}}}
\nc{\BP}{\mathbb{P}}
\nc{\BE}{\mathbb{E}}
\nc{\BQ}{\mathbb{Q}}
\DeclareMathOperator{\BV}{{\mathbb Var}}
\DeclareMathOperator{\Cum}{{\mathbb Cum}}
\numberwithin{equation}{section}
\begin{document}

\renewcommand{\thefootnote}{\fnsymbol{footnote}}
\author{Michael A. Klatt\footnotemark[1], G\"unter Last\footnotemark[2]\, and Norbert Henze\footnotemark[3]}
\footnotetext[1]{michael.klatt@dlr.de;
German Aerospace Center (DLR), Institute for AI Safety and Security, Wilhelm-Runge-Str.~10, 89081 Ulm, Germany;
German Aerospace Center (DLR), Institute of Frontier Materials on Earth and in Space, Functional, Granular, and Composite Materials, 51170 Cologne, Germany;
Department of Physics, LMU M\"unchen, Schellingstr.~4, 80799 Munich, Germany.}
\footnotetext[2]{guenter.last@kit.edu;
Karlsruhe Institute of Technology, Institute for Stochastics, 76131 Karlsruhe, Germany.}
\footnotetext[3]{norbert.henze@kit.edu;
Karlsruhe Institute of Technology, Institute for Stochastics, 76131 Karlsruhe, Germany.}

\title{A genuine test for hyperuniformity}
\date{\today}
\maketitle

\begin{abstract}
\noindent %
We introduce a rigorous and sensitive significance test for
hyperuniformity that yields reliable results even from a single sample.
Our approach is based on a detailed analysis of the empirical Fourier
transform of a stationary point process in $\R^d$. For large system
sizes, we derive the asymptotic covariances and establish a multivariate
central limit theorem (CLT) for these empirical Fourier transforms.
Their absolute square value, the scattering intensity, is then used as
the standard estimator of the  structure factor. The above CLT holds for
a preferably large class of point processes, and whenever this is the
case, the scattering intensity satisfies a multivariate limit theorem as
well. Hence, we can use the likelihood ratio principle to test for
hyperuniformity. Remarkably, the asymptotic distribution of the
resulting test statistic is universal under the null hypothesis of
hyperuniformity. We obtain its explicit form from simulations with very
high accuracy. The novel test precisely keeps a nominal significance
level for hyperuniform models, and it rejects non-hyperuniform examples
with high power even in borderline cases. Moreover, it does so given
only a single sample with a practically relevant system size.
\end{abstract}

\noindent
{\em Keywords:}  point process, hyperuniformity, structure factor,
scattering intensity, central limit theorem, likelihood ratio test

\vspace{0.2cm}
\noindent
2020 Mathematics Subject Classification:  62H11; 60G55

\section{Introduction}\label{intro}

Disordered {\em hyperuniform} point patterns exhibit an anomalous
suppression of density fluctuations on large scales~\cite{Torquato2018}.
They can be both isotropic like a liquid and homogeneous like a crystal,
representing a new state of matter. As a result, they have attracted
significant and rapidly growing interest across various disciplines,
including physics~\cite{TorquatoStillinger2003, GJJLPL03, Torquato2018,
  lei_nonequilibrium_2019, klatt_universal_2019, wilken_random_2021,
nizam_dynamic_2021, SkolTorquato23, GaCaBertier23, BackAlSaVoigt24},
biology~\cite{ JiaoEtAl2014, huang_circular_2021}, materials
science~\cite{ chen_stonewales_2021, zheng_disordered_2020,
gorsky_engineered_2019-1}, and probability
theory~\cite{GL17a,GL17b,KLY20,Chatt19,Lachi21, BjoerklundHartnick24,
DeFlHueLe24, LachYogesh24, KLLY25, HuesmannLeble26}, among others.
However, despite this widespread interest, a rigorous statistical  test
to reliably assess the hypothesis of hyperuniformity for a given point
pattern has been lacking. This absence poses a risk of ambiguous
findings.

Hyperuniformity of a stationary point process is a second-order property
and can be defined in either real or Fourier space. In real space, a
point process is said to be hyperuniform if, in the limit of large
window sizes, the variance of the number of points within a window grows
more slowly than the volume of the window. Alternatively, in Fourier
space, hyperuniformity is characterized by the vanishing of the
so-called {\em structure factor} at small wave vectors (see
\cite{hansen_theory_2013} and \eqref{SFgeneral}). The structure factor
is essentially the Fourier transform of the reduced {\em pair
correlation function}, which plays a central role in the theory of point
processes~\cite{CSKM13,LastPenrose17}.

These definitions have led to various ad hoc tests of hyperuniformity
based on heuristic principles, which have been widely used in applied
research. A straightforward approach estimates the variance of the
number of points as a function of the window
size~\cite{dreyfus_diagnosing_2015, nizam_dynamic_2021}. However, this
{\em real space method} requires heuristic rules that avoid a
significant overlap between sampling
windows~\cite{klatt_characterization_2016, torquato_local_2021}. While a
rigorous test of hyperuniformity could be designed using independent
samples for each observation window, such an approach would require
millions of samples~\cite{torquato_local_2021}, making it impractical
for  real-world applications.

An alternative common approach involves extrapolating the structure
factor in the limit of small wave
vectors~\cite{dreyfus_diagnosing_2015,atkinson_critical_2016,
klatt_universal_2019}. Typically, this is done via a least-squares fit,
which  does not enforce the non-negativity constraint of the structure
factor, preventing it from serving as a rigorous significance test. This
drawback is avoided in a recent work~\cite{HGBL22} that provides a
detailed comparison of different estimators of the structure factor and
essentially restricts an empirical test to the smallest accessible
wavenumber. However, similar to real space sampling, this approach also
requires an independent sample for each data point and does not
extrapolate the structure factor to the origin.

In this paper, we address the limitations of previous ad hoc methods by
leveraging detailed distributional properties of the empirical Fourier
transform. Specifically, we focus on point processes whose estimated
structure factor follows a multivariate limit theorem. We hypothesize
that this limit theorem holds for a broad class of point processes, an
assumption supported by Theorem \ref{clt} and by \cite[Theorem
1]{BiscioWaage19}. Additionally, extensive simulations across various
point  processes further reinforce our hypothesis. By working in an
asymptotic setting, we introduce a rigorous {\em likelihood ratio test}
that effectively distinguishes  hyperuniform from  non-hyperuniform
samples for practically relevant models and parameter choices. Our
approach relies on asymptotic approximations, which naturally align with
the study of a long-range property such as hyperuniformity. Notably,
high-precision simulations confirm that these approximations remain
accurate even for a moderate system sizes of around 1\,000 points per
sample.
Here, ``system size'' refers to the mean number of points
per sample. Since we work with a fixed number density (intensity),
it can also equivalently refer to the linear dimension of the observation
window, such as the side length of a cube.

\begin{figure}[t]
  \centering
  \includegraphics[width=\linewidth]{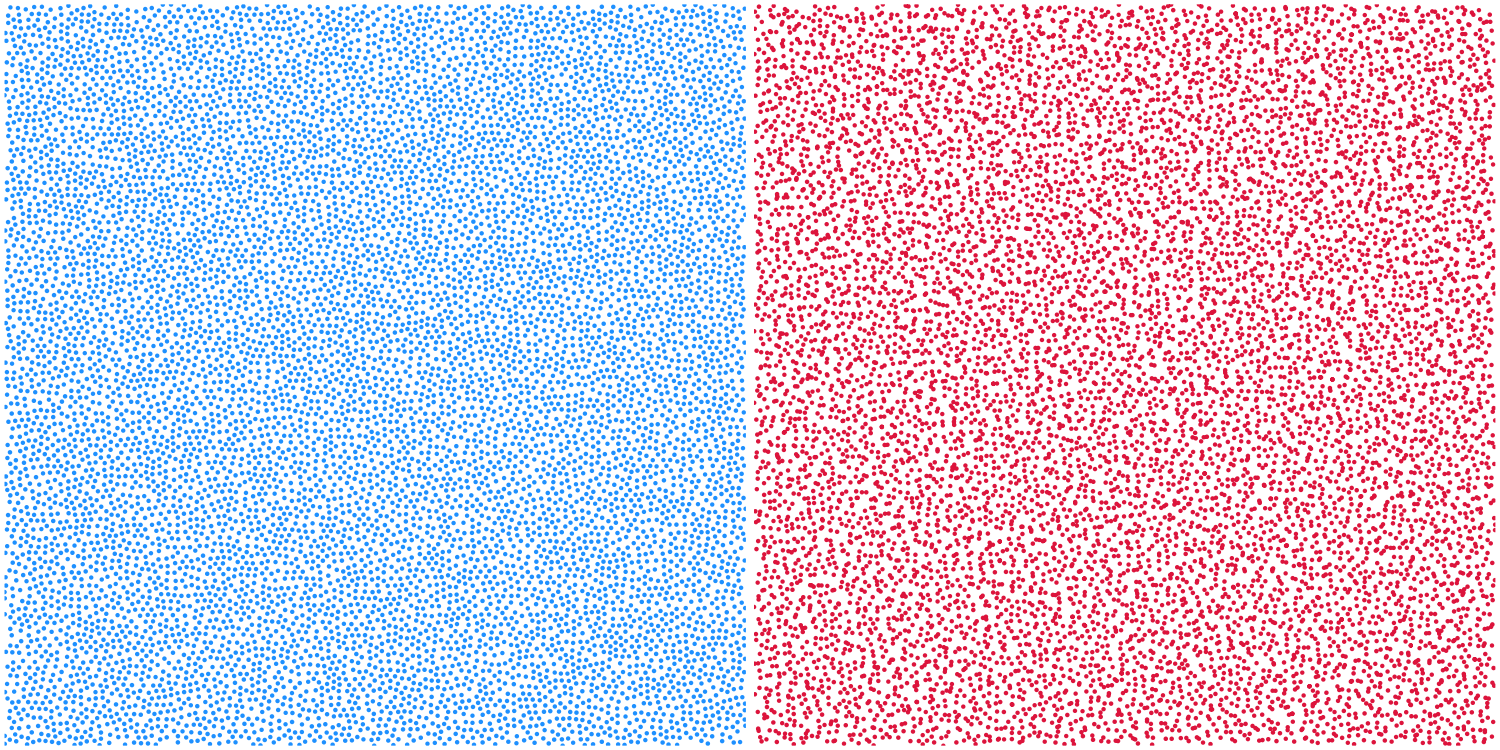}
  \caption{Realizations of a non-hyperuniform (left) and a hyperuniform
  (right) point process, both with about 10000 points.}
  \label{fig:sample-intro}
\end{figure}

To illustrate the difficulty of distinguishing a hyperuniform point
pattern from a non-hyperuniform one by eye,
Figure~\ref{fig:sample-intro} provides an instructive example. The left
panel shows a realization of a (non-hyperuniform) Mat\'ern III process
near saturation~\cite{CSKM13}. Although this process exhibits a high
degree of local order, it possesses Poisson-like long-range density
fluctuations, which are fundamentally incompatible with hyperuniformity.
In contrast, the right panel presents a hyperuniform process. This
sample was generated by first simulating a stealthy hyperuniform point
pattern~\cite{Torquato2018} and then independently perturbing each point
according to a Gaussian distribution. Both processes were simulated on
the flat torus. The subtle difference in long-range order between these
two samples is clearly detected by our novel test statistic. For the
non-hyperuniform sample, the value of the test statistic exceeds 300,
far surpassing the critical value of 2.39, which corresponds to a
nominal significance level of 5\%. In contrast, for the hyperuniform
sample, the test statistic attains the value 0.04, demonstrating the
remarkable sensitivity of the novel test for assessing hyperuniformity.

We exploit three asymptotic approximations in our test procedure. These
approximations are expected to hold simultaneously in the limit of large
system sizes; indeed, our simulations indicate rapid convergence even
for samples containing only a few hundred points. The three
approximations are as follows. First, the previously mentioned
multivariate limit theorem. Second, a putative limit theorem for our
test statistics, obtained from extensive simulations. Third, a parametric
expansion of the structure factor around the origin, which we use to
extrapolate its behavior. Specifically, in the third approximation, we
assume that the structure factor $S$ satisfies
\begin{align}\label{skparabola}
S(k)=s+t\|k\|^\alpha+o(\|k\|^\alpha),\quad \text{as $k\to 0$},
\end{align}
for some fixed $\alpha > 0$ and parameters $s\ge 0$ and $t\in\R$, at
least for wave vectors $k\in\R^d$ in a neighborhood of the origin. In
this setting, the hyperuniform case is characterized by $s=0$. The
approximation thus encompasses, in principle, all three classes of
hyperuniformity as defined, e.g., in \cite{Torquato2018}: class I with
$\alpha >1$, class II with $\alpha =1$, and class III with $0< \alpha <
1$. The third class corresponds to the least uniform point patterns for
which hyperuniformity is most difficult to detect. If there exists an
isotropic pair correlation function $g_2$ satisfying
\begin{align}\label{efinite2}
\int \|x\|^2|g_2(x)-1|\,\D x<\infty,
\end{align}
then assumption \eqref{skparabola} holds with $\alpha=2$, which can
therefore be considered as the most common case. If $|g_2(x)-1|\sim
\|x\|^{-d-\alpha}$ as $\|x\|\to\infty$, for $\alpha\in(0,2)$, then
\eqref{skparabola} holds as well. In the
recent preprint \cite{MastrilliBL25}, the authors assume $s=0$ and
propose methods for estimating the exponent $\alpha$. In contrast, we
fix $\alpha$ (typically $\alpha=2$) and focus on testing the
hyperuniformity hypothesis $s=0$.

In the following, we summarize our results and relate them to the
relevant literature. Section \ref{s2} introduces key concepts from the
theory of stationary point processes and formally defines
hyperuniformity. Following the classical reference \cite{Brillinger72}
(see also the recent survey \cite{HGBL22}), Section \ref{scovariance}
presents the empirical Fourier transform of a point process, defined
using a general taper function. Proposition \ref{pcov} establishes the
asymptotic covariance structure of this transform. While the special
case $d=1$ has been implicitly addressed in \cite{Brillinger72}, a
rigorous result for general dimensions has, to our knowledge, not been
previously established. A related paper is \cite{HeinPro10}, where the
authors examine the bias and variance of kernel estimators of the
asymptotic variance. In Section~\ref{sclt}, we extend a result of
\cite{Brillinger72} to general dimensions by proving a multivariate
central limit theorem (CLT) for the empirical Fourier transform (Theorem
\ref{clt}). As in \cite{Brillinger72}, we rely on an assumption
regarding the reduced factorial cumulant measures. Our proof does also
benefit from the classical work of  \cite{HeinrichSchmidt85}, which
employs a similar hypothesis to establish a related CLT; see also
\cite{HeinrichKlein14} for a more recent contribution to the asymptotic
normality of kernel estimators of correlation functions. We illustrate
our findings using Poisson cluster and $\alpha$-determinantal processes.

In the following, we use the {\em empirical scattering intensity}
\eqref{scattering} as a well-established estimator of the structure
factor; see \cite{Torquato2018}.  Section \ref{asymptsctt} explores
its multivariate asymptotic behavior for different wave vectors and
for large system sizes. We will see that the resulting limit
distribution consists of exponential random variables, which are
independent across different wave vectors (Proposition
\ref{tiidexp}). This crucial result requires the underlying point process
to satisfy the multivariate central limit theorem
\eqref{egoodpp}. Theoretically, this assumption is supported by 
Theorem \ref{clt} and \cite[Theorem 1]{BiscioWaage19}; see Remark
\ref{remgood2}. To validate these theoretical findings, we have
simulated six different models, including two hyperuniform ones, across
the first three dimensions. In each case, the theoretically predicted
behavior is observed over up to six orders of magnitude, even for
small system sizes of about 1\,000 points. These simulations indicate
a fast rate of convergence, ensuring that the asymptotic distribution
is attained with high accuracy at practically relevant system sizes.

In Section \ref{secML}, we shift the focus from point processes to the
asymptotic setting established in Proposition~\ref{tiidexp}.  By
employing the parametrization \eqref{skparabola} of the structure
factor, we obtain a statistical model with only two parameters,
denoted as $s$ and $t$.  This additional approximation and
simplification can be justified by taking sufficiently large system
sizes.  Within this framework, hyperuniformity is characterized by the
equation $s=0$, defining the null hypothesis $H_0$. To test this
hypothesis, we introduce {\em maximum likelihood estimators}
$\widehat{t}_0$ of $t$ under the null hypothesis and
$(\widehat{s},\widehat{t}_1)$ of $(s,t)$ under the full model,
respectively. Under $H_0$, the estimator $\widehat{t}_0$ of $t$ is
unbiased and consistent when applied to random data, following an
exact gamma distribution for finite system sizes.  Under the full
model we observe that, asymptotically, the distribution of
$\widehat{s}$ equals a mixture of the Dirac distribution at $0$ and a
gamma distribution, while $\widehat{t}_1$ follows a gamma
distribution.  In particular, the distribution of $\widehat{s}$ has an
atom at $0$.  These findings are based on numerical solutions of the
likelihood equations and extensive simulations.

In Section \ref{seclratio}, we introduce our likelihood ratio
test. Within the asymptotic and parametric setting of Section
\ref{secML}, we analyze the ratio of the maximum likelihood of the
data under the null hypothesis to the corresponding maximum likelihood
over the full parameter space. The resulting test statistic is twice
the negative logarithm of this ratio.  Analytically, the asymptotic
null distribution of this test statistic appears to be out of
reach. However, through extensive simulations, we have determined this
distribution with high accuracy. Our empirical results indicate that
this limit null distribution consists of a mixture of an atomic
component at $0$ (inherited from $\widehat{s}$) and a gamma
distribution. Notably, this distribution already applies for moderate
system sizes. Furthermore, we rigorously demonstrate that this limit
distribution does not depend on the specific value of the parameter
$t$. This result enables a significance test for hyperuniformity,
which can be conveniently applied to spatial point patterns
encountered in real-world applications.

In Section \ref{sectestexample}, we apply the novel test to independent
thinnings of the matching process from \cite{KLY20}; see also Example
(v) in Section \ref{asymptsctt}. Introducing a thinning parameter
provides a convenient way to tune the parameter $s$. The hyperuniform
case $s=0$ corresponds to the limit case of no thinning.  Our findings
confirm that the asymptotic distribution of the test statistic applies
extremely well to this point process. Remarkably, the test performs
reliably even for a single sample with a moderately large system size.

We apply our test with a nominal significance level of 0.05 for
different values of the parameter $s$. When $s=0$, the test accurately
maintains the nominal level.  Even for $s=10^{-3}$ and a system size of
$L=150$ in two dimensions (i.e., for about $150^2$ points), the
test rejects hyperuniformity in almost all cases. Furthermore for
$s=10^{-4}$, which is an order of magnitude smaller than typical
non-hyperuniform models (see~\cite{Torquato2018, klatt_universal_2019}),
the rejection rate increases dramatically as the systems size grows; see
Table~\ref{tab:power} for more details.

We conclude the paper with a summary of the test procedure and
remarks on the fundamental principles underlying the novel test. We then
outline several interesting open problems for future research
directions. For the sake of readability, some proofs have been moved to
an appendix.

\section{Preliminaries}\label{s2}

We begin by introducing some point process terminology,
primarily drawn from Chapters 8 and 9 of \cite{LastPenrose17}.
Our setting is $\R^d$ for $d\ge 1$, equipped with
the Borel $\sigma$-field $\cB^d$ and Lebesgue measure $\lambda_d$.
Let $\cB^d_b$ denote the collection of bounded elements of $\cB^d$, and let
$\bN$ represent the system of locally finite subsets of $\R^d$.
For $\mu\in\bN$ and $B\subset\R^d$ we denote by $\mu(B)$ the
cardinality of $\mu\cap B$. By definition, $\mu(B)\in\N_0$ for each
$B\in \cB^d_b$. We define $\mathcal{N}$ as the smallest $\sigma$-field on $\bN$
that makes the mappings $\mu\mapsto\mu(B)$ measurable for each $B\in\cB^d$.

A (simple) {\em point process} is a random element $\eta$ of
$(\bN,\mathcal{N})$, defined on some fixed probability space
$(\Omega,\mathcal{F},\BP)$ with associated expectation operator $\BE$.
The process is said to be {\em stationary} if the distribution of
$\eta+x:=\{y+x:y\in\eta\}$ does not depend on $x\in\R^d$.
In this case, $\gamma:=\BE \eta([0,1]^d)$ is called the
{\em intensity} (or, in physics terminology, the {\em number density}) of $\eta$.
By Campbell's formula,
\begin{align}\label{eCampbell}
\BE \sum_{x\in\eta}f(x)=\gamma \int f(x)\,\D x
\end{align}
for each measurable function $f\colon\R^d\to[0,\infty)$. Here and in what follows,
each unspecified integral is over
the full domain.

We consider a stationary point process
$\eta$ with positive and finite intensity $\gamma$.
Additionally, we assume that $\eta$ is {\em locally square integrable}, meaning that
$\BE\eta(B)^2<\infty$ for each $B\in\cB^d_b$.
The {\em second factorial moment measure} $\alpha_2$ of $\eta$
is defined by 
\begin{align*}
\alpha_2(\cdot):=\BE \sum_{x,y\in\eta}\I\big{\{}x\in[0,1]^d,y-x\in\cdot\big{\}},
\end{align*}
where $\I\{ \cdot \}$ stands for the indicator function.  
It is often more convenient to work
with the {\em reduced second factorial moment measure} $\alpha^!_2$ of $\eta$, defined
by
\begin{align*}
\alpha^!_2(\cdot):=\BE \sideset{}{^{\ne}}\sum_{x,y\in\eta}\I\big{\{}x\in[0,1]^d,y-x\in\cdot\big{\}}.
\end{align*}
Here, the superscript $\ne$ indicates that the summation is taken over
distinct pairs.  From the definition and \eqref{eCampbell}, it follows
that $\alpha_2=\alpha^!_2+\gamma \delta_0$, where
$\delta_0$ is the {\em Dirac measure} at $0\in\R^d$. 
Note that $\alpha^!_2$ is a locally finite measure on $\R^d$,
which is invariant under reflections at the origin. Moreover, it satisfies the equation
\begin{align}\label{edisint}
\BE \sideset{}{^{\ne}}\sum_{x,y\in\eta}\I\{(x,y)\in\cdot\}
= \iint \I\{(x,x+y)\in\cdot\}\,\alpha^!_2(\D y)\,\D x.
\end{align}
In many cases, the measure $\alpha^!_2$ admits a density with respect to the Lebesgue measure, 
which gives
\begin{align*}
\alpha^!_2(\D y)=\gamma^2g_2(y)\,\D y,
\end{align*}
where $g_2\colon \R^d\to[0,\infty)$ is
the so-called {\em pair correlation function} of $\eta$. This function
is measurable and locally integrable,  and it can be assumed
to satisfy the symmetry property $g_2(x)=g_2(-x)$ for each $x\in\R^d$.
For any $B\in\cB^d$, it follows from \eqref{edisint}  that
\begin{align*}
\BE \eta(B)^2=\gamma\lambda_d(B)+\iint\I\{x\in B,x+y\in B\}\,\D x\,\alpha^!_2(\D y);
\end{align*}
see \cite[(4.25) and (8.8)]{LastPenrose17}.
If $B$ is bounded, then the variance of $\eta(B)$ is given by
\begin{align*}
\BV\eta(B)=\gamma\lambda_d(B)+\int \lambda_d(B\cap (B+y))\,\alpha^!_2(\D y)-\gamma^2\lambda_d(B)^2.
\end{align*}
Since $\int\lambda_d(B\cap (B+y))\,\D y=\lambda_d(B)^2$, an alternative representation of the variance is
\begin{align}\label{e1.11}
\BV\eta(B)=\gamma\lambda_d(B)+\int \lambda_d(B\cap (B+y))\,\beta^!_2(\D y),
\end{align}
where the {\em signed measure} $\beta^!_2$ is defined as
\begin{align}\label{ebeta2!}
\beta^!_2:=\alpha^!_2-\gamma^2\lambda_d.
\end{align}
The measure $\beta_2:=\beta^!_2+\gamma\delta_0=\alpha_2-\gamma^2\lambda_d$
is the {\em covariance measure} of $\eta$; see \cite[Chapter 9]{Bremaud20}.  
In fact, $\beta^!_2$ is the second reduced
factorial cumulant measure of $\eta$,
which is a special case of a locally finite measure $\beta^!_m$, defined in
\eqref{ereducedcum} for each integer $m \ge 2$.
The covariance measure is only defined on the system of bounded Borel sets of $\R^d$.
The restriction $\beta^!_2(B\cap \cdot)$
of $\beta^!_2$ onto a bounded Borel subset $B$ of $\R^d$
is a finite signed measure, which hence can be written as the
difference of two orthogonal finite measures. This allows us to
define the {\em total variation measure}
$|\beta^!_2|$,
first on $B$, and then, by consistency, on
each Borel set. The result is a locally finite measure on $\R^d$.
If the point process $\eta$ has a pair correlation function $g_2$, then
\begin{align*}
\beta^!_2(\D x)=\gamma^2(g_2(x)-1)\,\D x
\end{align*}
and
\begin{align}\label{ebeta227}
|\beta^!_2|(\D x)=\gamma^2|g_2(x)-1|\,\D x.
\end{align}
In Sections \ref{scovariance} and \ref{sclt}, we assume that
\begin{align}\label{efinite}
|\beta^!_2|(\R^d)<\infty.
\end{align}
Clearly, this condition is equivalent to $|\beta_2|(\R^d)<\infty$.

Let $W\subset\R^d$ be a convex compact set with a non-empty interior.
For each $y\in\R^d$, we have
$\lim_{r\to\infty}\lambda_d(rW)^{-1}\lambda_d(rW\cap (rW+y))=1$.
If condition \eqref{efinite} holds, it follows from \eqref{e1.11} and the dominated convergence theorem that
\begin{align}\label{easvariance}
\lim_{r\to\infty}\frac{\BV\eta(rW)}{\lambda_d(rW)}
=\gamma+\beta^!_2(\R^d).
\end{align}
The point process $\eta$ is said to be {\em hyperuniform} with respect to  $W$
if
\begin{align*}
\lim_{r\to\infty}\frac{\BV\eta(rW)}{\lambda_d(rW)}=0.
\end{align*}
If this convergence holds with $W$ chosen as the unit ball
$B_1:=\{x\in\R^d:\|x\|\le 1\}$, where $\|\cdot \|$ denotes the Euclidean norm, we simply say that
$\eta$ is hyperuniform~\cite{TorquatoStillinger2003, Torquato2018}.
If condition \eqref{efinite} holds, then
\eqref{easvariance} shows that hyperuniformity is equivalent to
\begin{align}\label{ehyperg2}
\beta^!_2(\R^d)=-\gamma.
\end{align}

For the remainder of this section, we assume that condition \eqref{efinite} holds.
Let $\ti$ denote the imaginary unit,  and let $\langle \cdot, \cdot \rangle$ represent the inner product
on $\R^d \times \R^d$. The {\em structure factor} $S$ of $\eta$ is the function on $\R^d$ defined by
\begin{align}\label{SFgeneral}
S(k):=1+ \frac{1}{\gamma} \int \te^{-\ti\langle k,x\rangle}\,\beta_2^!(\D x),\quad k\in\R^d.
\end{align}
This definition is justified by assumption \eqref{efinite}.
If $\eta$ has a pair correlation function, then the structure factor is given by
\begin{align*}
S(k)=1+\gamma \int \te^{-\ti\langle k,x\rangle}(g_2(x)-1)\,\D x,\quad k\in\R^d.
\end{align*}
Thus, $\gamma^{-1}(S-1)$ is the Fourier transform of $g_2-1$.
In the general case, $\gamma^{-1}(S-1)$ is
the Fourier transform of the covariance measure.
Since $\beta_2^!$ is reflection-invariant, 
the function $S$ is real-valued, and we have $S(k)=S(-k)$
for each $k\in\R^d$. Finally, hyperuniformity is equivalent to
\begin{align*}
S(0)=0.
\end{align*}
Throughout this paper, the symbol $0$ stands for the origin in $\R^d$
for any $d \ge 1$, which includes, in particular, the real number zero.
The specific meaning of $0$ will always be clear from the context. 

In the important special case of a {\em perturbed lattice},
assumption \eqref{efinite} does not hold; see, e.g., 
\cite{klatt_cloaking_2020}. Nevertheless, it is still possible to define the
structure factor in such cases, at least in a neighborhood
of the origin. To achieve this, we introduce the {\em Fourier transform}
$\hat{\beta}_2$ of $\beta_2$, which is a locally finite measure 
on $\R^d$ satisfying
\begin{align}\label{e:spectral}
\int (f\star g)(x) \,\beta_2(\D x)=\frac{1}{(2\pi)^d}\int \hat{f}(k)\hat{g}(-k)\,\hat{\beta}_2(\D k),
\end{align}
for all bounded measurable functions $f,g\colon\R^d\to\R$ with bounded support,
where the {\em tilted convolution}  $(f\star g)$ is defined as  
$(f\star g)(x):=\int f(x)g(x-y)\,dy$,  $y\in\R^d$.
This measure is also known as the {\em Bartlett spectral measure}
of $\eta$; see  \cite[Chapter 9]{Bremaud20}.
If $\beta_2$ has finite total variation, then
\begin{align}\label{e:7134}
\hat\beta_2(\D k)=\gamma  S(k)\,\D k,
\end{align}
where $S(k):=\gamma^{-1}\int \te^{-\ti \langle k,x\rangle}\,\beta_2(\D x)$, $k\in\R^d$.
We continue to refer to the function $S$ as the structure factor, 
even if equation \eqref{e:7134} holds only in a neighborhood of the origin.
If $\eta$  is a (stationary) perturbed lattice based on a
{\em lattice} $\mathbb{L}\subset\R^d$
and the distribution $\BQ$ of independent translations, then
\begin{align*}
\hat{\beta}_2(\D k)=(1-|\hat{\BQ}|^2(k))\,\D k
+\sum_{l\in \mathbb{L}^*\setminus \{0\}}|\hat{\BQ}(k)|^2\delta_l(\D k),
\end{align*}
where $\hat{\BQ}$ is the Fourier transform of $\BQ$, 
and $\mathbb{L}^*$ denotes the {\em dual lattice} of $\mathbb{L}$.
Hence, we may define $S(k):=1-|\hat{\BQ}(k)|^2$, except at points
in $\mathbb{L}^*\setminus \{0\}$. If $\int \|x\|^2\,\BQ(\D k)<\infty$ and
$\BQ$ is isotropic (i.e., invariant under rotations), then 
\eqref{skparabola} holds with $\alpha=2$.
We refer to \cite{gabrielli_point_2004, klatt_cloaking_2020,
Torquato2018, BjoerklundHartnick24} for more details on perturbed
lattices and their Fourier transforms.

Assuming that \eqref{skparabola} holds,
we test the hypothesis $S(0)=0$ (i.e.,\ hyperuniformity) using the
empirical {\em scattering intensity}
\begin{align}\label{scattering}
\mathcal{S}_r(k):= \frac{1}{\eta(W_r)} \bigg|\sum_{x\in\eta\cap W_r}\te^{-\ti\langle k,x\rangle}
-\I\{k=0\}\gamma r^d\bigg|^2,\quad k\in\R^d,
\end{align}
where $1/0:=0$, and  $W_r$ is a cube centered at $0$ with side length $r>0$.

\section{Asymptotic covariance structure}\label{scovariance}

In this section, we assume that
\begin{align}\label{efinite3}
\int (1+\|x\|^p)\,|\beta^!_2|(\D x)<\infty,
\end{align}
for some $p>0$, which strengthens condition \eqref{efinite}.
If, for instance, $|g_2(x)-1|\sim \|x\|^{-d-\alpha}$ as $\|x\|\to\infty$, for some $\alpha\in(0,2)$, then
\eqref{skparabola} holds and we can choose $p<\alpha$.
Since $\beta^!_2$ is locally finite, we can assume $p\le 1$ without loss of generality.

We fix a bounded and measurable function $h\colon\R^d\to[0,\infty)$ satisfying
\begin{align}\label{e974}
M_1(h)<\infty,
\end{align}
where
\begin{align*}
M_j(h):=\int h(x)^j\,\D x,\quad j=1,2.
\end{align*}
Since $h$ is bounded, it follows from \eqref{e974} that $M_2(h)<\infty$.
We also assume the existence of constants $a,b>0$ such that
\begin{align}\label{e239}
\I\{\|y\|\le  b\}\int |h(x+y)-h(x)|\,\D x\le a \|y\|^p, 
\end{align}
where $p>0$ is as in assumption \eqref{efinite3}.
Note that if \eqref{e239} holds for $p=1$ (or $p>1$) then it holds for
each $p\le 1$. This only requires replacing the constant $a$ by $ab^{1-p}$.
Note also that
\begin{align*}
\int |h(x+y)-h(x)|\,\D x\le 2M_1(h),\quad y\in\R^d.
\end{align*}
The class of measurable bounded functions $h\colon\mathbb{R}^d \rightarrow [0,\infty)$ that satisfy conditions    
\eqref{e974} and \eqref{e239} is denoted by ${\mathcal H}_p$.

Throughout this article we  take $h\in {\mathcal H}_p$ for some (suitable) $p>0$.
Since $M_2(h)<\infty$ we have by the Riemann--Lebesgue lemma
\begin{align}\label{e977}
\lim_{\|k\|\to\infty} \int h(x)^2 \te^{\ti\langle k,x\rangle}\,\D x=0.
\end{align}
For $r>0$, we define the rescaled function $h_r(x):=h(x/r)$. Observing that $\int h_r(x)^j\,\D x=M_j(h)r^d$
for $j\in\{1,2\}$, we note that
\begin{align}\label{e977a}
\frac{1}{r^d}\int h_r(x)^2 \te^{\ti\langle k,x\rangle}\,\D x=\int h(x)^2 \te^{\ti\langle rk,x\rangle}\,\D x,\quad k\in\R^d.
\end{align}
Later, we consider functions $r\mapsto k_r$ from $(0,\infty)$ to $\R^d$, abbreviated as $(k_r)$, 
satisfying $k_\infty:=\lim_{r\to\infty}k_r\ne 0$ and
\begin{align}\label{e234}
\lim_{r\to\infty} &\frac{1}{r^{d/2}}\int h_r(x) \te^{\ti\langle k_r,x\rangle}\,\D x
=\lim_{r\to\infty} \frac{1}{r^{d/2}}\int h_r(x) \te^{-\ti\langle k_r,x\rangle}\,\D x=0.
\end{align}
The space of all such functions is denoted by $\mathbf{F}$. 
We also frequently consider the extended space $\mathbf{F}\cup \{\mathbf{0}\}$,
where $\mathbf{0}$ is the zero function.

\begin{example}\label{exball}\rm Let $B_r\subset \R^d$ denote the
closed ball with radius $r$, centered at the origin, and let $\kappa_d=\pi^{d/2} / \Gamma(1+d/2)$
denote the volume of $B_1$. Let $r_d>0$ satisfy $\kappa_dr_d^d=1$, so that $B_{r_d}$ has volume 1.
Let $h$ be the indicator function of $B_{r_d}$.  Then
  $M_1(h)=M_2(h)=1$, and the rescaled function $h_r$ is the indicator function of a ball   with radius $rr_d$. 
To verify condition \eqref{e234}, we use a classical result
  concerning the Fourier transform of the indicator function of a ball;
  see, e.g., \cite[Section B.5]{Gra09}. Specifically, for each $r>0$,
\begin{align*}
\int_{B_r} \te^{\ti\langle k,x\rangle}\,\D x
=\Big(\frac{2\pi r}{\|k\|} \Big)^{d/2} J_{d/2}(\|k\|r), \quad   k\in\R^d\setminus\{0\},
\end{align*}
where $J_a\colon[0,\infty)\to\R$ is the {\em Bessel function of the first kind},
defined for $a>-1/2$ as
\begin{align*}
J_a(t):=\sum^\infty_{m=0}\frac{(-1)^m}{m!\Gamma(m+a+1)}\Big(\frac{t}{2}\Big)^{2m+a}, \quad t\ge 0.
\end{align*}
Since $\lim_{t\to\infty} J_a(t)=0$ (see ~\cite[Section B.8]{Gra09}), 
condition \eqref{e234} holds for any function $(k_r)$ with a non-trivial limit.

To verify \eqref{e239}, define $a_+:=\max(0,a)$ for $a\in\R$ and fix $y\in\R^d$.
If $x\in B_{r_d}$, but $x+y\notin B_{r_d}$, and if $\|y\|\le r_d$, the triangle inequality  implies $r_d-\|y\| \le \|x\|$. Thus,
\begin{align*}
\int&\I\{x\in B_{r_d},x+y\notin B_{r_d}\}\,\D x\\
&\le\I\{\|y\|\le r_d\}\int\I\{x\in B_{r_d}\setminus B_{(r_d-\|y\|)_+}\}\,\D x
+\I\{\|y\|>r_d\}.
\end{align*}
(Note that this inequality holds trivially if $\|y\| >r_d$).)
The first term can be bounded by
\begin{align*}
\I\{\|y\|\le r_d\}\big(\kappa_dr_d^d-\kappa_d(r_d-\|y\|)^d\big)
&\le\I\{\|y\|\le r_d\}\sum_{j=0}^{d-1}c_jr_d^j \|y\|^{d-j}\\
&\le \I\{\|y\|\le r_d\}\|y\| \sum_{j=0}^{d-1}c_jr_d^{d-1},
\end{align*}
for some positive constants $c_0,\ldots,c_{d-1}$. Since the same bound applies to
\begin{align*}
\int\I\{x\notin B_{r_d},x+y\in B_{r_d}\}\,\D x
=\int\I\{x-y\notin B_{r_d},x\in B_{r_d}\}\,\D x,
\end{align*}
it follows that \eqref{e239} holds for $p=1$, showing that $h \in {\mathcal H}_p$
for $p\le 1$. 
\end{example}

\begin{example}\label{excube}\rm Assume that $h$ is the
indicator function of the unit cube $W_1:=[-1/2,1/2]^d$, centered at the origin.
Then $M_1(h)=M_2(h)=1$, and
$h_r$ is the indicator function of the cube $W_r:=[-r/2,r/2]^d$.
A straightforward calculation yields
\begin{align*}
\int_{W_r} \te^{\ti \langle k,x\rangle}\,\D x=\prod^d_{j=1}\frac{2}{k_j}\sin\frac{k_jr}{2},\quad k=(k_1,\ldots,k_d)\in\R^d,
\end{align*}
where we define $\sin st/s:=t$ for $s=0$.
Thus,  
\eqref{e234} is valid for each function $(k_r)$ that has a limit in the set
\begin{align}\label{eA*}
A_\Box:=\bigg\{k\in\R^d:\sum^d_{i=1}\I\{|k_i|>0\}> \frac{d}{2} \bigg\}.
\end{align}
If $d=2$, then $A_\Box=\{(k_1,k_2):k_1k_2\ne 0\}$.
Furthermore, equation \eqref{e234} also holds if $k_rr\in 2\pi\Z^d$ 
for each $r>0$, in which case the integral in \eqref{e234}  vanishes.
Relation \eqref{e239} can be checked as in Example \ref{exball}.
\end{example}

\begin{example}\label{exnormal}\rm Now, assume that $h(x)=(2\pi)^{-d/2}\te^{-\|x\|^2/2}$
is the density of the standard normal distribution. Then 
\begin{align}
\int h(x) \te^{\ti\langle k,x\rangle}\,\D x&=\te^{-\|k\|^2/2},\quad k\in\R^d,\\
\int h(x)^2 \te^{\ti\langle k,x\rangle}\,\D x&=\pi^{-d/2}\te^{-\|k\|^2},\quad k\in\R^d.
\end{align}
Hence \eqref{e234} is valid for any $(k_r)$ with a non-trivial limit.
We now assert that $h\in {\mathcal H}_1$, meaning that \eqref{e239} holds for $p=1$.
To verify this, we apply the mean value theorem. For 
$x,y\in\R^d$, we obtain
$h(x+y)-h(x)=\langle  h'(x+\vartheta),y\rangle$,
where $h'$ is the gradient of $h$ and
$\vartheta\in[0,1]$ depends on $x$ and $y$. Consequently,
\begin{align*}
|h(x+y)-h(x)|=c\te^{-\|x+\vartheta y\|^2/2}|\langle x+\vartheta y,y\rangle|,
\end{align*}
where $c:=(2\pi)^{-d/2}$. It follows that
\begin{align*}
\int |h(x+y)-h(x)|\,\D x\le c\|y\|\int \|x\| \te^{-\|x+\vartheta y\|^2/2}\,\D x
+c\|y\|^2 \int \te^{-\|x+\vartheta y\|^2/2} \,\D x.
\end{align*}
Now, assume that $\|y\|\le 1$. It is easy to verify that $\|x+\vartheta y\|\ge \|x\|/2$ whenever $\|x\|\ge 2$.
By splitting the domain of integration into $\|x\|< 2$ and $\|x\|\ge 2$,
it follows that \eqref{e239} holds for $p=1$ and $b=1$.
Further details are left to the reader.
\end{example}

Similarly to \cite{Brillinger72}, we define, for each $r>0$,
a random function $T_r$ on $\R^d$ by
\begin{align}\label{eTr}
T_r(k):= \frac{1}{r^{d/2}}\sum_{x\in\eta}h_r(x)\te^{-\ti\langle k,x\rangle}
-\I\{k=0\}M_1(h)\gamma r^{d/2},\quad k\in\R^d.
\end{align}
In the language of point process theory, $T_r(k)$ is a linear function
of $\eta$.  Note that $T_r(-k)$ is the complex conjugate
$\overline{T_r(k)}$ of $T_r(k)$.  In the case of Example~\ref{excube},
$T_r$ is closely related to the scattering intensity
given in \eqref{scattering}.  In this context, the function $h$ appearing in
\eqref{eTr} is referred to as a {\em taper function}; see
\cite{Brillinger72,rajala_what_2023}. In physics, the variables defined in \eqref{eTr}
are known as {\em collective coordinates};
see \cite{uche_collective_2006}.
If $(k_r)\in\mathbf{F}\cup \{\mathbf{0}\}$, then
applying  Campbell's formula \eqref{eCampbell} along with
\eqref{e234} yields
\begin{align}\label{e590}
\lim_{r\to\infty}\BE T_r(k_r)= 0.
\end{align}
In fact, if $(k_r)=\mathbf{0}$ then $\BE T_r(k_r)=0$ for each $r>0$.
The following result describes the asymptotic behavior of the covariances of the functionals $T_r$.
The proof is given in the Appendix; see Subsection \ref{appendix1}.

\begin{proposition}\label{pcov} Suppose that $(k_r),(\ell_r)\in \mathbf{F}\cup \{\mathbf{0}\}$,
and set $k:=k_\infty$ and $\ell:=\ell_\infty$. If $k\ne \ell$, then
\begin{align}\label{eascov1}
\lim_{r\to\infty}\BE T_r(k_r)\overline{T_r(\ell_r)}=0.
\end{align}
Furthermore,
\begin{align}\label{eascov1a}
\lim_{r\to\infty}\BE |T_r(k_r)|^2=\gamma M_2(h) S(k).
\end{align}
\end{proposition}

Later, we will use the following non-asymptotic result.

\begin{corollary}\label{c153} Assume that \eqref{efinite} holds. Let $r>0$ and $k\in\R^d\setminus\{0\}$. Then
\begin{align}\label{e107}\notag
\BE |T_r(k)|^2&=\gamma M_2(h)S(k)
+\frac{1}{r^d} \iint h_r(x)(h_r(x+y)-h_r(x))
\te^{\ti\langle k,y\rangle}\,{\rm d} x\,\beta^!_2(\text{{\rm d}} y)\\
&\quad+\frac{\gamma^2}{r^d}\bigg|\int h_r(x)\te^{-\ti\langle k,x\rangle}\, \text{{\rm d}} x\bigg|^2.
\end{align}
\end{corollary}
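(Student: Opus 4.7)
The plan is to retrace the computation inside the proof of Proposition \ref{pcov} in the case $k=\ell\neq 0$, but this time to keep all the terms exactly rather than passing to the limit. The key observation is that \eqref{e389} already gives the decomposition
$$\BE|T_r(k)|^2 = \frac{\gamma}{r^d}\int h_r(x)^2\,\D x + \frac{1}{r^d}\iint h_r(x)h_r(x+y)\te^{\ti\langle k,y\rangle}\,\D x\,\alpha_2^!(\D y),$$
which is an identity valid for every $r>0$ (since it rests on Campbell's formula \eqref{eCampbell} and the definition \eqref{edisint} of $\alpha_2^!$, not on the tapering hypotheses).

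First I would substitute $\alpha_2^! = \beta_2^! + \gamma^2\lambda_d$ to split the second integral into a $\beta_2^!$-part and a $\gamma^2\lambda_d$-part. In the $\lambda_d$-part, the change of variables $z = x+y$ factorizes the double integral as
$$\iint h_r(x)h_r(x+y)\te^{\ti\langle k,y\rangle}\,\D x\,\D y = \int h_r(x)\te^{-\ti\langle k,x\rangle}\,\D x \cdot \int h_r(z)\te^{\ti\langle k,z\rangle}\,\D z = \Bigl|\int h_r(x)\te^{-\ti\langle k,x\rangle}\,\D x\Bigr|^2,$$
which after division by $r^d$ and multiplication by $\gamma^2$ produces the last term in \eqref{e107}.

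Next I would treat the $\beta_2^!$-part by inserting the telescoping identity $h_r(x)h_r(x+y) = h_r(x)^2 + h_r(x)\bigl(h_r(x+y)-h_r(x)\bigr)$. The summand involving $h_r(x)^2$ separates the variables, giving
$$\frac{1}{r^d}\int h_r(x)^2\,\D x \int\te^{\ti\langle k,y\rangle}\,\beta_2^!(\D y) = M_2(h)\gamma^{-1}(S(k)-1)\cdot\gamma = M_2(h)(S(k)-1),$$
by \eqref{SFgeneral}. Combined with $\frac{\gamma}{r^d}\int h_r(x)^2\,\D x = \gamma M_2(h)$, this assembles into $\gamma M_2(h)S(k)$, the leading term in \eqref{e107}, while the second summand of the telescoping identity is exactly the remainder integral appearing in the corollary.

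There is essentially no obstacle here, since every manipulation is an algebraic identity; no limits, no dominated convergence, no appeal to \eqref{e977}, \eqref{e234}, or \eqref{e239} is required. The only point to verify carefully is that all the integrals are well-defined and Fubini applies, which follows because $h_r$ is bounded with compact support (or at least $L^1\cap L^2$ by \eqref{e974}) and because $|\beta_2^!|$ is locally finite, so the double integrals are absolutely convergent after the split.
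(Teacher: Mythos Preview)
Your argument is essentially identical to the paper's own proof of this corollary. There is one arithmetic slip: by \eqref{SFgeneral} one has $\int \te^{\ti\langle k,y\rangle}\,\beta^!_2(\D y)=\gamma(S(k)-1)$, so the $h_r(x)^2$ contribution from the $\beta^!_2$-part equals $M_2(h)\cdot\gamma(S(k)-1)=\gamma M_2(h)(S(k)-1)$, not $M_2(h)(S(k)-1)$ as you wrote; it is this quantity that combines with $\gamma M_2(h)$ to give the leading term $\gamma M_2(h)S(k)$.
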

\begin{proof}
By \eqref{e389}, we have
\begin{align*}
\BE |T_r(k)|^2=\gamma M_2(h)
+\frac{1}{r^d} \iint h_r(x)h_r(x+y)\te^{\ti\langle k,y\rangle}\,\D x\,\alpha^!_2(\D y).
\end{align*}
Using \eqref{ebeta2!}, this can be rewritten as
\begin{align*}
\BE |T_r(k)|^2=\gamma M_2(h)+A_r+B_r,
\end{align*}
where
\begin{align*}
A_r&:=\frac{1}{r^d} \iint h_r(x)h_r(x+y)
\te^{\ti\langle k,y\rangle}\,\D x\,\beta^!_2(\D y),\\
B_r&:=\frac{\gamma^2}{r^d}\iint h_r(x)h_r(x+y)e^{-\ti\langle k,y\rangle}\,\D x\,\D y.
\end{align*}
Since $\te^{\ti\langle k,y\rangle}=\te^{\ti\langle k,x\rangle}\te^{-\ti\langle k,x+y\rangle}$, it follows that 
$B_r$ corresponds to the third term on the right-hand side of \eqref{e107}.
Moreover, we obtain
\begin{align*}
A_r=M_2(h)\int\te^{\ti\langle k,y\rangle}\,\beta^!_2(\D y)
+\frac{1}{r^d} \iint h_r(x)(h_r(x+y)-h_r(x))
\te^{\ti\langle k,y\rangle}\,\D x\,\beta^!_2(\D y),
\end{align*}
which concludes the proof.
\end{proof}

\begin{remark}\label{p:098}\rm In general, Proposition \ref{pcov} does not hold
for functions  $(k_r),(\ell_r)$ that tend to zero in a non-trivial manner.
For example, consider $(k_r)=\mathbf{0}$ and $\ell_r=\pi/r$.
Then the first term on the right-hand side of \eqref{2078}
equals $\gamma\int h(x)^2\te^{\ti\langle \pi,x\rangle}\,\D x$.
Combining this with \eqref{2079}, we obtain
\begin{align*}
\lim_{r\to\infty}\BE T_r(0)\overline{T_r(\ell_r)}=(\gamma+\beta^!_2(\R^d))\int h(x)^2\te^{\ti\langle \pi,x\rangle}\,\D x,
\end{align*}
which is not equal to $\gamma M_2(h)$. 
\end{remark}

For $k\in\R^d$ and $r>0$, we denote
by $T_{r,1}(k)$ the real part and by $T_{r,2}(k)$
the imaginary part of $T_r(k)$. 
Given $(k_r),(l_r)\in\mathbf{F}\cup\{\mathbf{0}\}$, we define
\begin{align}\label{e:ascov}
  \sigma_{m,n}(k,\ell):=\lim_{r\to\infty}\BE T_{r,m}(k_r)T_{r,n}(\ell_r),
  \quad m,n\in\{1,2\},
\end{align}
where $k:=k_\infty$ and $\ell:=\ell_\infty$. This definition is justified by
Proposition \ref{pcov}. 
We also define the set
\begin{align}
A_h:=\{k_\infty:(k_r)\in\mathbf{F}\}\cup\{\mathbf{0}\}, 
\end{align}
noting that $A_h=-A_h$.

\begin{corollary}\label{c945} Let $k,\ell\in A_h$ such that either $k=\ell$ or $k\notin\{\ell,-\ell\}$.  Then
\begin{align}\label{e3.45}
\sigma_{m,n}(k,\ell)=\I\{m=n,k=\ell\}\frac{\gamma M_2(h)S(k)}{2},\quad m,n\in\{1,2\}.
\end{align}
\end{corollary}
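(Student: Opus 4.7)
My approach would derive the Corollary directly from Proposition \ref{pcov} by two substitutions and an algebraic decomposition into real and imaginary parts. The key identity, immediate from \eqref{eTr}, is $\overline{T_r(k)} = T_r(-k)$ for every $k\in\R^d$. Combined with $A_h=-A_h$ and the symmetry $S(-k)=S(k)$, Proposition \ref{pcov} applied to the pairs $(k,\ell)$ and $(k,-\ell)$ yields
\begin{align*}
\lim_{r\to\infty}\BE T_r(k)\overline{T_r(\ell)} &= \I\{k=\ell\}\,\gamma M_2(h)\,S(k),\\
\lim_{r\to\infty}\BE T_r(k)T_r(\ell) &= \I\{k=-\ell\}\,\gamma M_2(h)\,S(k).
\end{align*}

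The set $\R^{[d]}$ is precisely what is needed to kill the second indicator: for nonzero $k,\ell\in\R^{[d]}$ the equality $k=-\ell$ would force both $\ell$ and $-\ell$ into $\R^{[d]}$, contradicting $|\R^{[d]}\cap\{\ell,-\ell\}|=1$. Hence the second limit vanishes whenever at least one of $k,\ell$ is nonzero.

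I would then expand the complex products
\begin{align*}
T_r(k)\overline{T_r(\ell)} &= \bigl[T_{r,1}(k)T_{r,1}(\ell)+T_{r,2}(k)T_{r,2}(\ell)\bigr]+\ti\bigl[T_{r,2}(k)T_{r,1}(\ell)-T_{r,1}(k)T_{r,2}(\ell)\bigr],\\
T_r(k)T_r(\ell) &= \bigl[T_{r,1}(k)T_{r,1}(\ell)-T_{r,2}(k)T_{r,2}(\ell)\bigr]+\ti\bigl[T_{r,1}(k)T_{r,2}(\ell)+T_{r,2}(k)T_{r,1}(\ell)\bigr],
\end{align*}
take expectations, pass to the limit, and read off real and imaginary parts. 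Adding and subtracting the four resulting scalar identities and solving the $2\times2$ system produces $\sigma_{11}(k,\ell)=\sigma_{22}(k,\ell)=\I\{k=\ell\}\gamma M_2(h)S(k)/2$ and $\sigma_{12}(k,\ell)=\sigma_{21}(k,\ell)=0$, which is the claim.

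There is no substantive obstacle: once Proposition \ref{pcov} is in hand, the argument is bookkeeping on a linear system. The two points that genuinely need care are the combinatorial role of $\R^{[d]}$ explained above, and the boundary case where $k$ or $\ell$ equals $0$. In that case $T_r(0)$ is already real, so $T_{r,2}(0)\equiv 0$ and all components involving it vanish trivially, and the remaining entries are isolated directly from Proposition \ref{pcov}.
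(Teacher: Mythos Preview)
Your proposal is correct and follows essentially the same approach as the paper: apply Proposition \ref{pcov} twice, once with $\ell$ and once with $-\ell$ (using $\overline{T_r(\ell)}=T_r(-\ell)$ and $A_h=-A_h$), then decompose into real and imaginary parts and solve the resulting linear system. The paper splits into the cases $k=\ell$ and $k\ne\ell$, whereas you handle both at once via the observation that $k=-\ell$ is impossible in $\R^{[d]}$ unless $k=\ell=0$; this is a cosmetic difference only.
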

\begin{proof}
By \eqref{eascov1a},
\begin{equation*}
\sigma_{1,1}(k,k)+\sigma_{2,2}(k,k)=\gamma M_2(h)S(k).
\end{equation*}
We also have the trivial identity $\sigma_{1,2}(k,k)=\sigma_{2,1}(k,k)$.

Applying \eqref{eascov1} with $\ell=-k$ and using the property $T_r(-k)=\overline{T_r(k)}$, we obtain
\begin{align*}
\sigma_{1,1}(k,k)-\sigma_{2,2}(k,k)=\sigma_{1,2}(k,k)+\sigma_{2,1}(k,k)=0.
\end{align*}
Thus, \eqref{e3.45} holds for $k=\ell$.
Now, assume that  $k\notin\{\ell,-\ell\}$. Applying
\eqref{eascov1} with $\ell$ and $-\ell$, it follows that
$\sigma_{m,n}(k,\ell)=0$ for all $m,n\in\{1,2\}$.
\end{proof}

\begin{remark}\label{remadmissable}\rm
Assume that $h$ is the cubic taper function from Example \ref{excube}.
Let $r>0$ and $k=(k_1,\ldots,k_d)\in \R^d\setminus\{0\}$.
Assuming \eqref{efinite3}, it follows
from Corollary \ref{c153}
and a straightforward computation that
\begin{align}\label{e5001}
\BE |T_r(k)|^2=\gamma S(k)+f_r(k)
+ \frac{\gamma^2}{r^d} \bigg|\prod^d_{j=1}\int^{r/2}_{-r/2}\te^{-\ti k_js}\,\D s\bigg|^2,
\end{align}
where
\begin{align*}
f_r(k):=\frac{1}{r^d} \iint h_r(x)(h_r(x+y)-h_r(x))
\te^{\ti\langle k,y\rangle}\,\D x\,\beta^!_2(\D y).
\end{align*}
From the proof of Proposition~\ref{pcov}, it follows that 
\begin{align*}
\sup_{k\in \R^d\setminus\{0\}}|f_r(k)| \to 0,\quad \text{as $r\to\infty$}.
\end{align*}
If $k_j=2\pi n/r$ for some $j\in[d]$ and $n\in\N$, then the
last term on the right-hand side of \eqref{e5001} vanishes.
Let 
\begin{align}\label{Kr'} 
K'_r:=\{k \in \R^d: k_{j}=2\pi n/r \text{ for some } j \in [d] \text{  and } n \ge 1\}. 
\end{align}
Then
\begin{align*}
\sup_{k\in K'_r}\big|\gamma^{-1}\BE |T_r(k)|^2- S(k)\big|
\to 0,\quad \text{as $r\to\infty$}.
\end{align*}
On the other hand,  we may choose $k_j\equiv k_j(r)=\pi/r$ for each $j\in[d]$;
see also Remark~\ref{p:098}. Then, for any $\varepsilon >0$,
equation \eqref{e5001} implies that
\begin{align*}
\sup_{k\in \R^d\setminus\{0\},\|k\|\le \varepsilon}\big|\gamma^{-1}\BE |T_r(k)|^2- S(k)\big|
\to \infty,\quad \text{as $r\to\infty$}.
\end{align*}

In our statistical setting we will work with a cubic taper function.
For fixed $r$, we will always choose the wave vectors 
from the set $K_r\subset K'_r$, which is defined by
\eqref{eq:reciprocal} after replacing the system‑size label $L$ by $r$
(i.e., $K_r=K_L$).
This choice is commonly adopted in the physics
literature~\cite{Torquato2018,hansen_theory_2013}, see also
\cite{HGBL22} for a recent mathematical discussion and \cite{liu_impact_2025} for a related study in physics, 
and it is also supported by Corollary \ref{remgood}.
\end{remark}

\section{A central limit theorem}\label{sclt}

In this section, let $\eta$ be a stationary point process with intensity $\gamma$ satisfying
\begin{align}\label{emoment}
\BE \eta(B)^m<\infty
\end{align}
for each bounded Borel set $B \subset \R^d$ and each $m \in \N$.
We assume $h\in \mathcal H_p$ for some $p > 0$, which will be fixed in Theorem~\ref{clt}.
Let $Z_k$, $k\in\R^d$, be independent  centered $\C$-valued normally
distributed random variables. The components of $Z_k$
are assumed to be independent and to have variance $\gamma M_2(h)S(k)/2$.
Here and in the sequel, a normally distributed random variable with variance $0$ is
almost surely constant. 
For a given  $n\in\N$, we consider $(k_{1,r})_{r>0},\ldots,(k_{n,r})_{r>0}\in\mathbf{F}\cup\{\mathbf{0}\}$
and set $k_i:=\lim_{r\to\infty}k_{i,r}$. In view of Corollary \ref{c945}, we assume that
\begin{align}\label{e:0518}
\{k_i,-k_i\}\cap \{k_j,-k_j\}=\emptyset,\quad i\ne j.
\end{align}
Our primary interest is in the multivariate central limit theorem
\begin{align}\label{eclt}
(T_r(k_{1,r}),\ldots,T_r(k_{n,r}))\overset{d}{\longrightarrow}(Z_{k_1},\ldots,Z_{k_n}), \quad \text{as $r\to\infty$},
\end{align}
where $\overset{d}{\longrightarrow}$ denotes convergence in distribution.
In the case $d=1$, Brillinger (see ~\cite[Theorem 4.2]{Brillinger72}) proved that this convergence
holds under a suitable assumption on the so-called (reduced) {\em cumulant measures} of
$\eta$. We shall extend Theorem 4.2 of \cite{Brillinger72} to general dimensions,
using similar methods.

To state our result, we introduce some concepts from the theory of point processes (see, e.g., \cite{DaleyVereJones}).
For each integer  $m$, the $m$-th {\em factorial moment measure} of $\eta$
is the  measure on $(\R^d)^m$, defined by
\begin{align*}
\alpha_m(\cdot):=\BE \sideset{}{^{\ne}}\sum_{x_1,\ldots,x_m\in\eta}
\I\{(x_1,\dots,x_m)\in \cdot\}.
\end{align*}
Here, $\ne$ indicates summation over $m$-tuples with
pairwise distinct entries. By assumption \eqref{emoment}, $\alpha_m(B)<\infty$ for each
bounded Borel set $B\subset (\R^d)^m$. 
If $\alpha_m$ has a density $\rho_m$ with respect to Lebesgue measure on $(\R^d)^m$, then $\rho_m$ is called 
the $m$-th {\em correlation function} of $\eta$.
For $j\in[m]:=\{1,\dots,m\}$, we denote by $\Pi_{m,j}$
the system of all partitions $\pi$ of $[m]$ whose cardinality
$|\pi|$ equals $j$. The
$m$-th {\em factorial cumulant measure} $\beta_m$ of $\eta$
is the signed measure on $(\R^d)^m$ given by
\begin{align}\label{deffaccum}
\beta_m(B_1\times\cdots\times B_m):=\sum^m_{j=1}(-1)^{j-1}(j-1)!\sum_{\pi\in\Pi_{m,j}}
\prod_{I\in\pi} \alpha_{|I|}(\times_{i\in I}B_i)
\end{align}
for bounded Borel sets $B_1,\ldots,B_m\subset\R^d$. Note that
$\beta_m$ is only defined on bounded Borel subsets of $(\R^d)^m$.
An equivalent definition is
\begin{align}\label{ecummeasure}
\beta_m(B_1\times\cdots\times B_m)
=\frac{\partial^m}{\partial u_1\cdots\partial u_m}\log G\Big(1+\sum_{j=1}^mu_j\I_{B_j}
\Big)\Big|_{u_1=\cdots= u_m=0},
\end{align}
where each of the partial derivatives is to be taken from the left. Here,
the {\em generating functional} $G$ of $\eta$ is given by
\begin{align*}
G(f):=\BE \prod_{x\in\eta}f(x)
\end{align*}
for each  measurable function $f\colon\R^d\to[0,\infty)$.
For instance, we have $\beta_1(\D x)=\gamma \D x$ and
\begin{align}\label{ebeta2}
\beta_2(\D (x,y))=\alpha_2(\D(x,y))-\gamma^2\D x\D y.
\end{align}

Assume that $m\ge 2$. Since $\eta$ is stationary, $\beta_m$ is invariant under joint shifts,
and there exists a locally finite signed measure
$\beta^!_m$ on $(\R^d)^{m-1}$ (the $m$-th reduced
factorial cumulant measure of $\eta$), satisfying
\begin{align}\label{ereducedcum}
\beta_m(\cdot)=
\iint\I\{(x,x_1+x,\ldots,x_{m-1}+x)\in \cdot\}\,\beta^!_m(\D (x_1,\ldots,x_{m-1}))\,\D x.
\end{align}
If $m=2$, then \eqref{ebeta2} shows that
\eqref{ereducedcum} is consistent with \eqref{ebeta2!}.
As in the case $m=2$, we can introduce the {\em total variation measure}
$|\beta^!_m|$ of $\beta^!_m$, which is a locally finite measure on
$(\R^d)^{m-1}$.

The key assumption in this section is
\begin{align}\label{Bmixing}
|\beta^!_m|((\R^d)^{m-1})<\infty,\quad m\ge 3.
\end{align}
Together with $|\beta^!_2|(\R^d)<\infty$, this condition is known as
{\em Brillinger mixing}; see \cite{HeinrichSchmidt85}.
Later, we shall assume \eqref{Bmixing} together with \eqref{efinite3},
which slightly strengthens the Brillinger mixing property.
In one dimension, the author of \cite{Brillinger72} makes a slightly stronger
assumption (see display (2.14) in \cite{Brillinger72}).

The proof of the following central limit theorem is given in the Appendix; 
see Subsection \ref{appendix2}.

\begin{theorem}\label{clt} Let $\eta$ be a stationary point process
satisfying \eqref{emoment}, \eqref{efinite3}, and \eqref{Bmixing}, 
where $(k_{1,r})_{r>0},\ldots,(k_{n,r})_{r>0}\in\mathbf{F}\cup\{\mathbf{0}\}$
satisfy \eqref{e:0518} and $k_i:=\lim_{r\to\infty}k_{i,r}$.
Then the multivariate
central limit theorem \eqref{eclt} holds.
\end{theorem}

The following corollary is crucial for our statistical applications.
It follows from Theorem \ref{clt} and the comments made in 
Example \ref{excube}.

\begin{corollary}\label{remgood} Let $h$ be the cubic taper function from
Example \ref{excube}. Let $n\in\N$ and suppose that
$k_1,\ldots,k_n\in \R^d\setminus\{0\}$ satisfy \eqref{e:0518}.
Suppose that $N_j\colon(0,\infty)\to \Z^d$, $j\in[n]$, are such that
$2\pi N_j(r)/r\to k_j$ as $r\to\infty$. Then
\begin{align}\label{egoodpp}
\bigg(T_r\Big(\frac{2\pi}{r} N_1(r)\Big),\ldots, T_r\Big( \frac{2\pi}{r} N_n(r)\Big)\bigg)
\overset{d}{\longrightarrow}(Z_{k_1},\ldots,Z_{k_n}), \quad \text{as $r\to\infty$}.
\end{align}
\end{corollary}

\begin{remark}\label{r:BYY}\rm Assume that $\eta$ has fast decaying correlation 
functions of all orders, in the sense of \cite[(1.10)]{BYY}. It was proved in
\cite{BYY} (and in \cite{NazarovSodin12} for $d=2$) that $\eta$ is Brillinger mixing. 
Furthermore, the case $p=q=1$ of \cite[(1.10)]{BYY}, together with
our assumption $\int |g_2(x)-1|\,\D x<\infty$,
implies that
$\int \|x\|^p|g_2(x)-1|\,\D x<\infty$ for every $p>0$ (cf.\  \eqref{ebeta227} and
\eqref{efinite3}). Therefore, the assumptions of  Theorem \ref{clt} 
are satisfied, and the CLT \eqref{eclt} applies. The paper \cite{BYY}
provides numerous examples of stationary point processes with rapidly decaying correlations.
If $h$ is the cubic taper function one might try to apply 
\cite[Theorem 1.13]{BYY} directly. Since the function $f$ used there is subject to the
same scaling as the cube, this would require an exponential 
function as a score function, a choice that is formally not permitted in this theorem.
\end{remark}

We conclude the section with two general examples of stationary point processes 
satisfying assumption \eqref{Bmixing}, along with the Ginibre process as a specific example.

\begin{example}\label{expoissoncluster}\rm
Suppose that $\eta$ is a stationary {\em Poisson cluster process}
as in \cite[Exercise 8.2]{LastPenrose17}; see also
\cite[Proposition 12.1.V]{DaleyVereJones}. Let $\BQ$ denote
the cluster distribution relative to the cluster center
(a probability measure on $\bN$), and assume that
the cluster size has moments of all orders, i.e.,
\begin{align}\label{e104}
\int \mu(\R^d)^m\,\BQ(\D\mu)<\infty,\quad m\in\N.
\end{align}
Let $\gamma_0$ denote the intensity of the underlying
stationary Poisson process, and
let $\chi$ be a point process with distribution $\BQ$.
From \cite[Exercise 5.6]{LastPenrose17} or
\cite[Proposition 12.1.V]{DaleyVereJones}, it follows that 
\begin{align*}
\log G(1+f)=\gamma_0\sum^\infty_{n=1}\frac{1}{n!}
\int
\bigg[\BE\sideset{}{^{\ne}}\sum_{x_1,\ldots,x_n\in\chi}f(x_1+x)\cdots f(x_n+x)\bigg]\D x
\end{align*}
for any measurable function $f\colon\R^d\to[-1,\infty)$.
Let $\alpha^\BQ_n$, $n\in\N$, denote the factorial moment measures of
$\chi$. Then, the above can be rewritten as 
\begin{align*}
\log G(1+f)=\gamma_0\sum^\infty_{n=1}\frac{1}{n!}
\int \bigg[\int f(x_1+x)\cdots f(x_n+x)\,\alpha^\BQ_n(\D (x_1,\ldots,x_n))\bigg]\,\D x.
\end{align*}
It follows from \eqref{ecummeasure} that the factorial cumulant
measures of $\eta$ are given by
\begin{align*}
\beta_n(\cdot)=\gamma_0\int \I\{(x_1+x,\ldots, x_n+x)\in\cdot\}\,\alpha^\BQ_n(\D (x_1,\ldots,x_n))\,\D x,
\quad n\ge 2.
\end{align*}
Therefore, the reduced factorial cumulant
measures of $\eta$ take the form
\begin{align*}
\beta^!_n(\cdot)=\gamma_0\int \I\{(x_2-x_1,\ldots, x_n-x_1)\in\cdot\}\,\alpha^\BQ_n(\D (x_1,\ldots,x_n)),
\quad n\ge 2.
\end{align*}
These measures are non-negative, and \eqref{e104} ensures that  $\beta^!_n((\R^d)^{n-1})<\infty$.
For example, suppose the points $X_{n,1},\ldots,X_{n,n}$
of $\chi$, conditionally on $\chi(\R^d)=n$, are independent
and identically distributed, and that $\BE \|X_{n,2}-X_{n,1}\|\le c$
for some constant $c$ independent of $n$. Then, under assumption
\eqref{e104}, a straightforward calculation shows
that $\int \|x\|\,\beta^!_2(\D x)<\infty$. 
In fact, it would suffice in this case to assume  \eqref{e104} only for $m=2$.
\end{example}

Let $K\colon \R^d\times \R^d \to\C$ be a continuous and positive semi-definite {\em kernel}. Fix $\alpha\ge -1$, 
and assume that $\eta$ is an {\em $\alpha$-determinantal} point process with kernel $K$.
This  means that the $n$-th correlation functions  of $\eta$ are given by
\begin{align}\label{edet}
\rho_n(x_1,\ldots,x_n)=\sum_{\pi\in \Sigma_n}\alpha^{n-\#\pi}
\prod^n_{j=1}K(x_j,x_{\pi(j)}),\quad x_1,\ldots,x_n\in\R^d,
\end{align}
where $\Sigma_n$ is the set of all permutations of $[n]$, and
$\#\pi$ denotes the number of {\em cycles} in the permutation $\pi\in\Sigma_n$.
In particular, we have $\rho_1(x)=K(x,x)$ and
\begin{align}\label{e2087}
\rho_2(x,y)=K(x,x)K(y,y)+\alpha |K(x,y)|^2,
\end{align}
using the symmetry $K(y,x)=\overline{K(x,y)}$.
Note that $\alpha$ has a different meaning than in equation
\eqref{skparabola}. We hope that this usage of well-established
notation will not cause any confusion.
When $\alpha=-1$, the process $\eta$ is called {\em determinantal}.
The existence of $\eta$ is a non-trivial matter and can be guaranteed only
for specific values of $\alpha$ (including $\alpha=-1$) and under additional conditions on the kernel $K$;
see \cite{ShiraiTakahashi03,CamDec10,LMR15}. Under our assumptions, the
correlation functions \eqref{edet} determine the distribution of $\eta$.

\begin{example}\rm Suppose that $K\colon\R^d\times\R^d\to\C$ 
is continuous and positive semi-definite, and satisfies $\int |K(0,x)|\,\D x<\infty$.
Since $K$ is positive semi-definite, this implies $\int |K(0,x)|^2\,\D x<\infty$.
Assume that $\eta$ is a stationary determinantal process with kernel $K$.
Then $K(x,x)$ is constant almost everywhere with respect to Lebesgue measure,
and this constant equals the intensity $\gamma$ of $\eta$. Moreover, since
$\rho_2(x,y)=\rho_2(0,y-x)=\rho_2(0,x-y)$, equation \eqref{e2087} implies
\begin{align}\label{e2089}
|K(x,y)|=|K(0,y-x)|=|K(0,x-y)|.
\end{align}
It was shown in \cite{Heinrich16} (see also  
\cite{BiscioLavancier2016} for the case $\alpha=-1$)
that $\eta$ is Brillinger mixing.
If $K(0,0)>0$, then it follows 
from \eqref{e2087} that the pair correlation function of $\eta$
can be chosen as
\begin{align*}
g_2(x)=1+\alpha \frac{|K(0,x)|^2}{|K(0,0)|^2},\quad x\in\R^d.
\end{align*}
Thus, our assumption \eqref{efinite3} is equivalent to
$\int \|x\|^p|K(0,x)|^2\,\D x<\infty$.
Note that the hyperuniformity condition \eqref{ehyperg2} means that
\begin{align}\label{ehyperK}
\int |K(0,x)|^2\,\D x=K(0,0).
\end{align}
\end{example}

\begin{example}\label{exGinibre}\rm Assume that $d=2$,  and consider the 
 kernel 
\begin{align*}
K(x,y):=\pi^{-1}\exp\big[x\bar y-|x|^2/2-|y|^2/2\big],
\end{align*}
where  $x,y \in \C$.
Let $\eta$ be the {\em Ginibre process}, a determinantal process
with kernel $K$; see, e.g., \cite{HKPY10}.
It is straightforward to verify that  the correlation functions \eqref{edet} are invariant  under
joint shifts, so $\eta$ is stationary.
Moreover, we have $K(0,0)=\pi^{-1}$ and
\begin{align*}
|K(x,y)|^2=\pi^{-2}\te^{-|x-y|^2},\quad x,y\in\C.
\end{align*}
Therefore, equation \eqref{ehyperK} holds, and $\eta$ is hyperuniform.
In addition, assumptions \eqref{efinite3} and  \eqref{Bmixing} are satisfied.
\end{example}

\section{Asymptotic behavior of the scattering intensity}\label{asymptsctt}

As before, let $\eta$ be a stationary, locally square-integrable point process
on $\R^d$ that satisfies assumption \eqref{efinite3}. 
From now on, we additionally assume that $\eta$ is {\em ergodic}; see
\cite[Chapter 8]{LastPenrose17} for details. In this section, we focus on the scattering intensity as defined in \eqref{scattering}.
We fix the cubic taper function $h$ as in Example \ref{excube}. With this choice,
equation \eqref{scattering} can be rewritten as
\begin{align}\label{e208}
\mathcal{S}_r(k)=\frac{r^d}{\eta(W_r)}|T_r(k)|^2,\quad k\in\R^d,
\end{align}
where $T_r(k)$ is given in \eqref{eTr}. Under assumption \eqref{efinite3},
Proposition~\ref{pcov} shows that $\lim_{r\to\infty} \BE |T_r(k_r)|^2=\gamma S(k_\infty)$, for 
$(k_r)$ as in Example~\ref{excube}.

Motivated by Corollary \ref{remgood}, we introduce
the following definition to control the asymptotic
behavior of $\mathcal{S}_r(k_r)$.
Let $Z_k$, for $k\in\R^d\setminus\{0\}$, be independent,  centered, complex-valued Gaussian random variables. The real and imaginary parts of $Z_k$ 
are assumed to be independent and to have variance $\gamma S(k)/2$.
We call $\eta$ {\em good} if the following condition holds
for every integer $n$ and every choice of 
$k_1,\ldots,k_n\in \R^d\setminus\{0\}$ satisfying \eqref{e:0518}:
If $N_j\colon(0,\infty)\to \Z^d$, $j\in[n]$, are such that
$2\pi N_j(r)/r\to k_j$ as $r\to\infty$, then \eqref{egoodpp} holds.

\begin{remark}\label{remgood2}\rm
We believe that the class of good point processes is rather large.
However, rigorous results concerning the  asymptotic normality
of empirical Fourier transforms for general point processes remain scarce.
Our Theorem \ref{clt} extends a result from \cite{Brillinger72} to arbitrary dimensions.
The authors of \cite{MugRenshaw96} mention that Brillinger's approach can be extended 
to two dimensions, but they neither provide assumptions nor a rigorous proof.
Our proof benefits from the classical work 
\cite{HeinrichSchmidt85}, where a CLT for certain shot-noise processes was established.
In all of these cases, the method of cumulants is employed. 
This technique was also used in~\cite{BYY}  to prove a CLT 
for specific functionals of point processes. Although the result
in~\cite{BYY} does not directly apply in our setting, it
includes  many examples of Brillinger mixing point processes;
see Remark \ref{r:BYY}. 
In the recent preprint \cite{MastrilliBL25}, the authors establish a CLT
for linear statistics of Brillinger mixing point processes, replacing  the
trigonometric functions $x\mapsto \te^{\ti\langle k,x\rangle}$
with scaled versions of smooth, rapidly decaying functions.
The even more recent preprint \cite{Mastrilli25} derives a  CLT for perturbed lattices.
Both approaches again rely on the cumulant method.

Suitable mixing conditions may provide
an alternative to the assumptions in Theorem~ \ref{clt}.
In fact, given our results of Section~\ref{scovariance},
one might apply \cite[Theorem 1]{BiscioWaage19}.
We do not pursue this direction further here.
\end{remark}

Let $E_k$, for $k\in\R^d \setminus\{0\}$, be independent  exponentially
distributed random variables with mean $S(k)$.
(If $S(k)=0$, then $E_k=0$ almost surely.)
The following result highlights a key property of good point processes that is crucial for our test:
the scattering intensities \eqref{scattering}
converge in distribution to independent exponentially distributed
random variables.

\begin{proposition}\label{tiidexp} Assume that $\eta$ is good.
Let $n\in\N$, and let
$k_1,\ldots,k_n\in \R^d\setminus\{0\}$ satisfy \eqref{e:0518}.
Suppose $N_j\colon(0,\infty)\to \Z^d$, $j\in\{1,\ldots,n\}$, satisfy
$2\pi N_j(r)/r\to k_j$ as $r\to\infty$.
Then
\begin{align*}
\bigg(\mathcal{S}_r\Big(\frac{2\pi}{r} N_1(r)\Big),\ldots,\mathcal{S}_r\Big(\frac{2\pi}{r} N_n(r)\Big)\bigg)
\overset{d}{\longrightarrow}(E_{k_1},\ldots,E_{k_n})
\quad \text{as $r\to\infty$}.
\end{align*}
\end{proposition}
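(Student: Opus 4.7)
The plan is to reduce the claim to two already-established inputs: the representation \eqref{e208} of the scattering intensity and the goodness hypothesis, glued together by the pointwise ergodic theorem and Slutsky. Concretely, for each $j$ one writes
\begin{align*}
\mathcal{S}_r\!\left(\tfrac{2\pi H_j(r)}{r}\right)
=\frac{r^d}{\eta(W_r)}\,\bigl|T_r\bigl(\tfrac{2\pi H_j(r)}{r}\bigr)\bigr|^2.
\end{align*}
Since $\eta$ is stationary and ergodic with intensity $\gamma$, the individual ergodic theorem for point processes (see \cite[Chapter 8]{LastPenrose17}) yields $\eta(W_r)/r^d\to\gamma$ almost surely, so the deterministic factor $r^d/\eta(W_r)$ converges to the constant $\gamma^{-1}$ in probability.

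For the random factors, the goodness of $\eta$ applied with the distinct limits $k_1,\ldots,k_n\in\R^{[d]}\setminus\{0\}$ and the approximating integer-lattice vectors $2\pi H_j(r)/r$ gives the joint convergence
\begin{align*}
\bigl(T_r(2\pi H_1(r)/r),\ldots,T_r(2\pi H_n(r)/r)\bigr)
\overset{d}{\longrightarrow}(Z_{k_1},\ldots,Z_{k_n}).
\end{align*}
Applying the continuous mapping theorem to the map $(z_1,\ldots,z_n)\mapsto(|z_1|^2,\ldots,|z_n|^2)$, and then using Slutsky's theorem to multiply the resulting vector by the scalar $r^d/\eta(W_r)\to\gamma^{-1}$ (in probability), one obtains jointly
\begin{align*}
\bigl(\mathcal{S}_r(2\pi H_j(r)/r)\bigr)_{j=1}^n
\overset{d}{\longrightarrow}\bigl(\gamma^{-1}|Z_{k_j}|^2\bigr)_{j=1}^n.
\end{align*}

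It remains to identify the limit distribution. By construction the real and imaginary parts of $Z_{k_j}$ are independent and centered normal with variance $\gamma S(k_j)/2$, hence $|Z_{k_j}|^2$ is the sum of squares of two independent $N(0,\gamma S(k_j)/2)$ variables, which is exponential with mean $\gamma S(k_j)$. Thus $\gamma^{-1}|Z_{k_j}|^2$ is exponential with mean $S(k_j)$, matching the law of $E_{k_j}$; the degenerate case $S(k_j)=0$ is covered by the convention that a variance-zero normal is almost surely zero, consistent with $E_{k_j}=0$ a.s. The independence of $E_{k_1},\ldots,E_{k_n}$ is inherited from the independence of $Z_{k_1},\ldots,Z_{k_n}$ built into the definition of goodness.

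The main obstacle is essentially bookkeeping rather than a new idea: the two substantive ingredients (the CLT statement encoded in goodness, and the almost-sure ergodic convergence of $\eta(W_r)/r^d$) already live in the paper, so one only has to check that Slutsky may be applied jointly, which is valid because the scaling factor is a single scalar converging in probability to a constant and therefore preserves joint weak convergence of the vector $(|T_r(2\pi H_j(r)/r)|^2)_{j=1}^n$.
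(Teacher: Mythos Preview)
Your proof is correct and follows essentially the same route as the paper's: representation \eqref{e208}, the spatial ergodic theorem for $\eta(W_r)/r^d\to\gamma$, goodness for the joint CLT of the $T_r$'s, and then continuous mapping/Slutsky to reach independent exponentials with means $S(k_j)$. The only cosmetic difference is that the paper folds the scalar $r^d/\eta(W_r)\to\gamma^{-1}$ into a single invocation of the continuous mapping theorem (using almost-sure convergence of the scalar), whereas you separate it out via Slutsky; both are equally valid.
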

\begin{proof}
Recall equation \eqref{e208}.
By the spatial ergodic theorem for point processes
(see \cite[Corollary 10.19]{Kallenberg} or \cite[Theorem 8.14]{LastPenrose17}), we have
$r^{-d}\eta(W_r)\to\gamma$ almost surely as $r\to\infty$.
Since $\eta$ is good, the continuous
mapping theorem (see \cite[Theorem 4.27]{Kallenberg}) implies
\begin{align*}
\bigg(\mathcal{S}_r\Big(\frac{2\pi}{r} N_1(r)\Big),\ldots,\mathcal{S}_r\Big(\frac{2\pi}{r} N_n(r)\Big)\bigg)
\overset{d}{\longrightarrow}(\gamma^{-1}|Z_{k_1}|^2,\ldots,\gamma^{-1}|Z_{k_n}|^2)
\,\text{as $r\to\infty$}.
\end{align*}
The random variables $|Z_{k_1}|^2,\ldots,|Z_{k_n}|^2$ are independent.
Let $X$ and $Y$ be independent standard normal random variables. Using
$\overset{d}{=}$ to denote equality in distribution, we have
\begin{align*}
Z_k\overset{d}{=}\frac{\sqrt{\gamma S(k)}}{\sqrt{2}}(X+\ti Y).
\end{align*}
Hence,
\begin{align*}
\frac{1}{\gamma} |Z_{k}|^2\overset{d}{=}\frac{S(k)}{2}(X^2+Y^2).
\end{align*}
It is well known that $X^2+Y^2$ follows an exponential distribution with
mean $2$, which completes the proof.
\end{proof}

To support and complement the claims made in Remark~\ref{remgood2}, we
have simulated several representative examples of both non-hyperuniform
and hyperuniform point processes:

\begin{enumerate}

\item[(i)] The stationary Poisson process; see \cite{LastPenrose17}.

\item[(ii)] A Poisson cluster process, as described in
  Example~\ref{expoissoncluster}. This process is characterized by the
  following properties. Starting from a realization of a homogeneous
  Poisson point process, each point serves as the center of a cluster.
  The clusters are independent and identically   distributed, centered
  at these ``parent points'', and consist of a Poisson-distributed
  number of   ``children'' points.  Each child is drawn from a centered
  normal distribution in $\R^d$ with identity covariance matrix (wrapped
  up on the torus). The resulting point process, which includes only the
  children and not the parents, is known as the (modified) {\em Thomas
  process}. In our simulations, the mean number of points per cluster is
  set to~10.

\item[(iii)] A {\em random adsorption process} (RSA)  close to
  saturation; see~\cite{zhang_precise_2013}.  Realizations are generated
  by sequentially placing spheres (or intervals/disks in lower
  dimensions) randomly into space under  a non-overlap constraint.  We
  use the simulation procedure described in \cite{zhang_precise_2013},
  conditioning on a fixed number of points per sample, which determines
  the volume fraction of the non-overlapping spheres. The volume
  fractions used are 74.7\% for $d=1$, 54.7\% for $d=2$, and 38.4\% for
  $d=3$. 

\item[(iv)] The uniformly randomized lattice (URL),
  see~\cite{klatt_cloaking_2020}. In this model, each point of a
  stationarized lattice is independently shifted within its
  corresponding unit cell, producing a stationary point process. 

\item[(v)] Matching of the lattice $\Z^d$ with a Poisson process
of intensity $\rho>1$ \cite{KLY20}. The matched Poisson points
form a hyperuniform point process. In our simulation,
we choose $\rho=3.0$. 

\end{enumerate}

The processes described in (i)-(iii) are non-hyperuniform, whereas those
in (iv) and (v) exhibit hyperuniformity.  In the latter two models, the
lattice is stationarized, i.e., it is shifted by a random vector
uniformly distributed in the cube $[0,1)^d$. Remarkably, in both cases,
the asymptotic exponential distribution of the scattering intensity is
achieved with striking accuracy even at relatively small system sizes
(see below).

\begin{remark}\rm
The processes listed in (i), (ii) and (iv) satisfy our general
assumption \eqref{efinite3}; in fact,  the stronger assumption
\eqref{efinite2}  holds in each case. (For the Poisson cluster process,
this requires a suitable moment assumption.) The same applies to the RSA
process in (iii), although a formal proof does not yet appear to exist
in the literature. A perturbed lattice, by contrast, does not generally
satisfy  condition \eqref{efinite}. To ensure that a perturbed lattice
fulfills \eqref{efinite}, the perturbation must be
chosen in a specific way, a property shared by the URL model;
see~\cite{klatt_cloaking_2020} for details.  If the perturbation is
isotropic and has a finite second moment, then \eqref{skparabola} holds
for $\alpha=2$ in a neighborhood of the origin. For the matching
process, \eqref{efinite} does not hold, at least not in the stationary
version used here. Nevertheless, the structure factor is well-defined
for each wave vector outside the reciprocal lattice of $\Z^d$. According
to \cite[Proposition 8.1]{KLY20}, the stationarized matching process
possesses a pair correlation function $g_2$. However, we expect that
$|g_2-1|$ is not integrable, implying  that condition \eqref{efinite}
fails. Still, we presume that the structure factor is well-defined for
wave vectors outside the reciprocal lattice of $\Z^d$. 
\end{remark}

The point processes described above were simulated in each spatial
dimension  $d \in \{1,2,3\}$ at unit intensity  $\gamma=1$,  using
periodic boundary conditions, i.e., on the flat torus.  Based on these
simulations, we computed the scattering intensity $\mathcal{S}_r(k)$, as
defined in \eqref{scattering}, for a variety of wave vectors (see
below). Since the theoretical distribution function of
$\mathcal{S}_r(k)$ is not explicitly known, we generated one million
independent realizations of each point process~---~per model, per
dimension, and for each wave vector considered.  Let
$\overline{\mathcal{S}}_r(k)$ denote the arithmetic mean of the $10^6$
realizations of $\mathcal{S}_r(k)$. We then computed the empirical
distribution function $F_{r,k}$ (say) of the normalized values
$\mathcal{S}_r(k)/\overline{\mathcal{S}}_r(k)$.  That is, for each $z
>0$, $F_{r,k}(z)$ gives the relative frequency of the $10^6$
realizations of $\mathcal{S}_r(k)/\overline{\mathcal{S}}_r(k)$ that are
less than or equal to $z$. In Figure~\ref{fig:exp_ccdf}, we plot the
{\em complementary} empirical distribution function $F_{r,k}^c := 1 -
F_{r,k}$, and, for simplicity, we denote this by  $F^c = F_{r,k}^c$.

The observation window is the cube $[0,L)^d$, where $L$ is the standard
notation for the system size in the applied literature. In our previous
notation, this corresponds to $r$. 
We use ${\mathcal{S}}(k)$ as shorthand for
$\mathcal{S}_r(k)=\mathcal{S}_L(k)$.
Throughout, we have chosen the number density to be $1$, corresponding
to an expected number of $L^d$ points
in $[0,L)^d$. Remarkably, we worked with relatively small system sizes.
More precisely, the expected number of points per sample is 1\,000 for
$d=1$, 1\,225 for $d=2$, and again 1\,000 for $d=3$.  In view of the
lattice-based models (iv) and (v), these numbers were chosen to be
integer powers of $d$. To accommodate the periodic boundary conditions,
we slightly modified the matching process (v) by performing it on the
torus. Theorem 5.1 in \cite{KLY20} strongly suggests that the difference
between this torus-based model and the restriction of the original model
to $[0,L)^d$ vanishes as $L\to\infty$. This modification requires a
corresponding adaptation of assumption \eqref{egoodpp}: namely, the
restriction of $\eta$ to $W_r$ (which determines $T_r$) must be replaced
by the set of matched points on the torus. In principle, this same
consideration applies to models (ii)-(iv). For simplicity, the data
shown in Figure~\ref{fig:exp_ccdf} are restricted to three distinct wave
vectors per dimension.  Setting $\tau := 2\pi/L$, the selected wave vectors
are:
\begin{itemize}
\item $k_1 = \tau, \ k_2 = 158 \tau, \ k_3 = 317 \tau $ if $d=1$,
\item $k_1= \tau \times (0,1), \ k_2= \tau \times (6,0), \ k_3= \tau \times (8,8)$ if $d=2$,
\item $k_1= \tau \times(0,1,0), \ k_2=\tau\times(2,2,2), \ k_3= \tau \times(3,0,0)$ if $d=3$.
\end{itemize}

\begin{figure}[t]
  \centering
  \includegraphics[width=\linewidth]{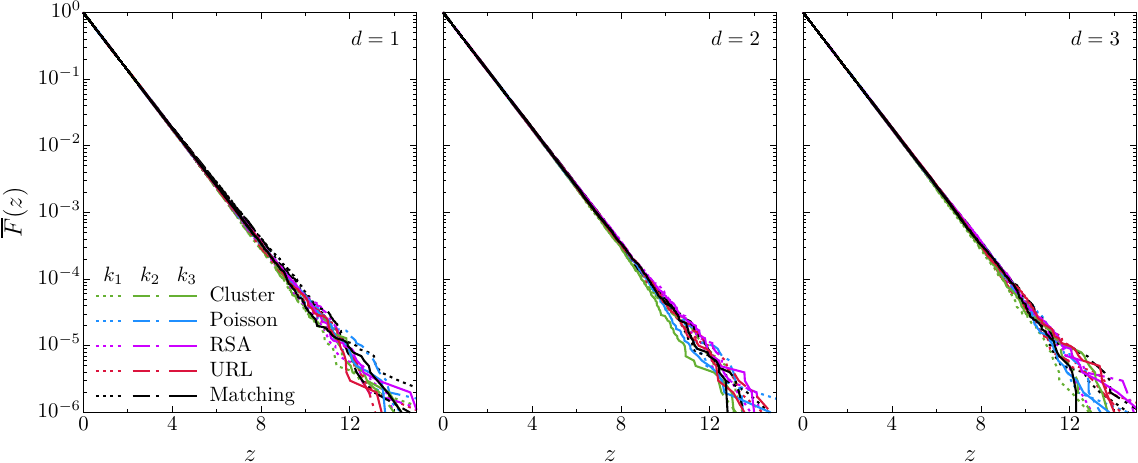}
  \caption{Empirical complementary distribution function
$F^c$ of the normalized scattering intensity
  $\mathcal{S}(k)/\overline{\mathcal{S}}(k)$.}
  \label{fig:exp_ccdf}
\end{figure}

According to Proposition~\ref{tiidexp} and the normalization
$\mathcal{S}(k) \mapsto \mathcal{S}(k)/\overline{\mathcal{S}}(k)$, the
complementary empirical distribution function $F^c$ is expected to
approximate the reciprocal exponential function, i.e., the complementary
distribution function of a unit-rate exponential distribution. In this
regard, Figure~\ref{fig:exp_ccdf} is illustrative: on a logarithmic
scale, the figure shows the empirical complementary distribution
function $F^c$ of the scattering intensity for dimensions $d=1$ (left),
$d=2$ (middle), and $d=3$ (right).  Due to the normalization, all curves
for $F^c$ collapse onto a single one, independent of the specific model
or wave vector chosen.  Moreover, in each case, the agreement with an
exponential distribution is remarkably accurate. This close
correspondence was also observed for many additional wave vectors not
shown here. Small fluctuations of $F^c(z)$ on the order of $10^{-6}$,
visible only at large values of
$\mathcal{S}(k)/\overline{\mathcal{S}}(k)$, are consistent with purely
statistical variation across our $10^6$ samples; we observe no
systematic deviations from the exponential distribution.

Finally, our results indicate only a weak correlation between
$\mathcal{S}(k)$ values for different wave vectors $k$. Even for
neighboring wave vectors  close to the origin in Fourier space, the
correlation coefficient remains consistently below 0.02. This
observation aligns perfectly with Proposition~\ref{tiidexp}.

\section{Maximum likelihood estimation}\label{secML}

In this section, we adopt an asymptotic framework naturally suggested by
Proposition~\ref{tiidexp}.  Within this setting, the background of point
processes becomes irrelevant, and our starting point is the multivariate
asymptotic distribution of scattering intensities provided by that
proposition.  This framework allows us to define maximum likelihood
estimators for a parameterized structure factor. Specifically, we employ
the expansion \eqref{skparabola} of $S(k)$ as $k\to 0$. While this
expansion introduces an additional approximation, it is justified by the
fact that \eqref{skparabola} holds in the same asymptotic regime of
large system sizes, where small wave vectors become accessible.

To proceed, consider $n$ wave vectors
$k_1,\ldots,k_n\in\R^d\setminus\{0\}$ satisfying condition
\eqref{e:0518}. Define $\kappa_j:=\|k_j\|^\alpha$ for $j\in[n]$. In a
slight abuse of prior notation, let $X_1,\ldots, X_n$ be independent
exponentially distributed random variables with expectations $\BE(X_j) =
s+t\kappa_j$ for $j\in[n]$, where $s\geq 0$ and $t\in\R$ are unknown
parameters to be estimated from observed realizations $x_1,\ldots,x_n$
of $X_1,\ldots,X_n$. This statistical problem is of interest in its own
right. Formally, the parameter space is defined as
\begin{align*}
\Theta &:=\{(s,t)\in\R^2:s\ge 0, s+t\kappa_1> 0,\ldots,s+t\kappa_n> 0\}\\
& = \bigg\{(s,t)\in\R^2:s\ge 0, t > - \frac{s}{\max(\kappa_1,\ldots,\kappa_n)} \bigg\},
\end{align*}
which describes a cone in the $(s,t)$-plane. For $(s,t)\in\Theta$, let
$\BP_{s,t}$ denote the probability measure governing the random
variables $X_1,\ldots,X_n$,  with corresponding expectation and variance
operators $\BE_{s,t}$ and $\BV_{s,t}$, respectively. This notation omits
the dependence  on $n$ and on the specific wave vectors $k_1,\ldots,k_n$
for simplicity. We formulate the null hypothesis $H_0$ as the condition
of hyperuniformity. In light of the  asymptotic regime and the
constraint from  \eqref{skparabola}, this corresponds to  the subset
$\Theta_0:=\{(s,t)\in\Theta:s=0\}$ of $\Theta$.

Under $\BP_{s,t}$, the density of $X_j$ (for each $j\in[n]$) is given by
\begin{align*}
y\mapsto  \frac{1}{s+t\kappa_j}\exp\left(-\frac{y}{s+t\kappa_j}\right), \quad y\ge 0.
\end{align*}
Given observed positive values $x_1,\ldots,x_n$, the corresponding 
log-likelihood function is
\begin{align}\label{eloglike}
\mathcal{L}(x_1,\ldots, x_n; s, t)
= \sum_{j=1}^{n}\Big[-\log (s+t\kappa_j) - \frac{x_j}{s+t\kappa_j}\Big].
\end{align}
We begin by considering the  maximum likelihood (ML) estimation of $t$
under the null hypothesis $H_0$.

\subsection{Estimation under \texorpdfstring{$H_0$}{H0}}\label{estimate H0}

Under $\BP_{0,t}$, that is, under the constraint  $s=0$ imposed by $H_0$,
the log-likelihood function simplifies to
\begin{align*}
  \mathcal{L}(x_1, \dots, x_n;0, t)=
-n \log t-\frac{1}{t}\sum_{j=1}^{n} \frac{x_j}{\kappa_j}-\sum_{j=1}^{n}\log \kappa_j.
\end{align*}
The first and second derivatives with respect to $t$ are given by
\begin{align*}
  \frac{\partial \mathcal{L}}{\partial t}(x_1, \dots, x_n;0,t)
&= -\frac{n}{t} + \frac{1}{t^2}\sum_{j=1}^{n} \frac{x_j}{\kappa_j},\\
\frac{\partial^2 \mathcal{L}}{\partial t^2}(x_1, \dots, x_n;0,t)
&=\frac{n}{t^2}-\frac{2}{t^3}\sum_{j=1}^{n} \frac{x_j}{\kappa_j}.
\end{align*}
Define the function $h_0\colon (0,\infty)^n \rightarrow \mathbb{R}$ by
\begin{align}\label{eh0}
h_0(x_1,\ldots, x_n)&:= \sup\{\mathcal{L}(x_1,\ldots,x_n;0,t):(0,t)\in\Theta\}.
\end{align}
Then the ML estimate
$\widehat{t}_0\equiv \widehat{t}_0(x_1,\ldots,x_n)$ of $t$ based on $x_1,\ldots,x_n$ is uniquely
determined by the condition 
$ 
\mathcal{L}(x_1,\ldots,x_n;0,\widehat{t}_0)=h_0(x_1,\ldots,x_n)$,
and it takes the explicit form
\begin{align} \label{eq:est_null_t}
\widehat{t}_0(x_1,\ldots,x_n)= \frac{1}{n}\sum_{j=1}^{n} \frac{x_j}{\kappa_j}.
\end{align}

Recall that the {\em gamma distribution} with
shape parameter $a$ and scale parameter $b$
has the density
\begin{align*}
y\mapsto \I\{y\ge 0\}\frac{b^{a} y^{a -1} \te^{-b x}}{\Gamma(a)}.
\end{align*}
Under $\BP_{0,t}$, the estimator $\widehat{t}_{0}(X_1,\ldots,X_n)$ is
the average of $n$ independent exponential random variables with mean
$t$. By a standard convolution property of the gamma distribution, it
follows that $\widehat{t}_{0}(X_1,\ldots,X_n)$ is gamma-distributed with
shape parameter $a = n$ and scale parameter $b = n/t$. Consequently,
\begin{align*}
\BE_{0,t} \big(\widehat{t}_{0}(X_1,\ldots,X_n)\big) = \frac{a}{b} = t, \qquad
\BV_{0,t} \big(\widehat{t}_{0}(X_1,\ldots,X_n)\big) = \frac{a}{b^2} = \frac{t^2}{n},
\end{align*}
showing that  $\widehat{t}_{0}(X_1,\ldots,X_n)$ is an unbiased and
consistent estimator of $t$. In the following, we often abuse notation
slightly by writing $\widehat{t}_{0}$ in place of
$\widehat{t}_{0}(X_1,\ldots,X_n)$; its meaning will be clear from the
context. We will adopt similar shorthand for two additional estimators
introduced below.

\begin{figure}[t]
  \centering
  \includegraphics[width=\linewidth]{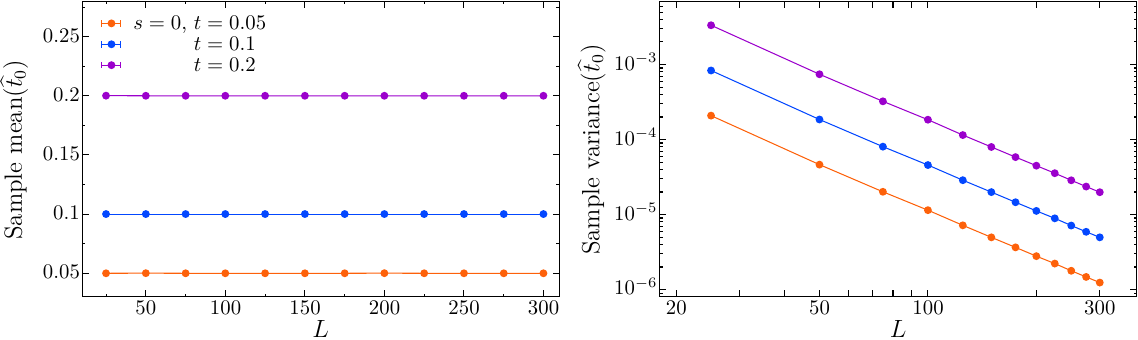}
  \caption{Unbiasedness and consistency of $\widehat{t}_0$ are
  demonstrated by plots of the sample mean value (left) and variance (right)
  as functions of the linear system size $L$.}
  \label{estimators_H0}
\end{figure}

In all our simulations, a cubic taper function is used, as is common in
physics. In accordance with Corollary~\ref{remgood}, we select the wave
vectors from the reciprocal lattice associated with a periodic
simulation box of side length $L$. Specifically, 
we choose the following \textit{commensurable} wave vectors:
\begin{align} 
  K_L := \big\{k\in \frac{2\pi}{L}\times\Z^d \setminus\{0\}: \|k\|<k_<\big\}.
  \label{eq:reciprocal}
\end{align}
In our experiments for the asymptotic setting, we take $d=2$ and
$k_<=0.75$. Other choices of $d$ and $k_<$ yield qualitatively similar
results, with differences arising primarily in the speed of convergence,
as expected.  Note that in the asymptotic setting the choice of $L$ (only)
determines which wave vectors are chosen for the simulations.
Note also that, for fixed $k_<$, the number
$n=|K_L|$ of wave vectors increases with $L$.

Figure~\ref{estimators_H0} illustrates the unbiasedness and consistency
properties of the estimator $\widehat{t}_0$ for three different values
of $t$. The left-hand panel displays the sample means as estimators
$\widehat{\BE}(\widehat{t}_0)$ of the expectation, while the right-hand
panel shows the sample variances as estimators
$\widehat{\BV}(\widehat{t}_0)$ of the variance. For each choice of the
parameters $t$ and $L$, the estimates are based on $10^6$ independent
realizations of $(X_1,\ldots,X_n)$. In the case $s=0$ and $L=100$,
Figure~\ref{histograms_H0_t0} presents the corresponding histograms
as density estimates for $\widehat{t}_0$, which align very well
with the fitted gamma distributions. Typically, histograms use vertical bars
whose areas represent frequency counts over a fixed range. In our
visualizations, however, we adopt a modified representation using
bullets instead of columns.  Each interval forming the histogram base
has equal length. The horizontal coordinates of a bullet represents the
midpoint of its base, while its height corresponds to the relative
frequency of data within that interval, divided by the base length.
Thus, the bullet heights serve as density estimates for $\widehat{t}_0$
at those midpoints. These conventions also apply, mutatis mutandis, to
all subsequent figures displaying histograms.

\begin{figure}[t]
  \centering
  \includegraphics[width=0.5\linewidth]{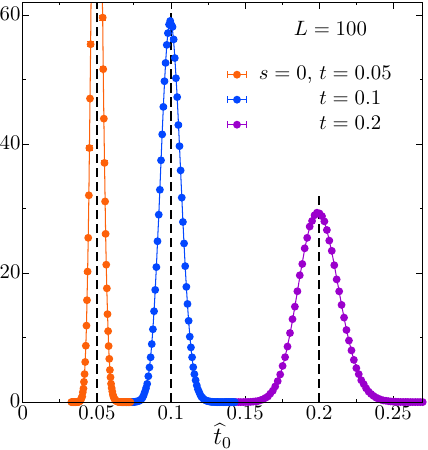}
  \caption{
    Density estimates for $\widehat{t}_0$ (bullets) and
  densities of gamma distributions with the
  same expectation and variance (solid lines).}
    \label{histograms_H0_t0}
\end{figure}

\subsection{Estimation in the full parameter space}\label{sec:fullparameter}

When $s$ and $t$ are allowed to vary freely within $\Theta$, we cannot
explicitly maximize \eqref{eloglike}. However, since $\Theta$ is a cone
in the $(s,t)$-plane, the problem of maximizing a function of two
variables can be reduced to a univariate optimization problem. To
simplify the notation, we omit the dependence on $x_1,\ldots,x_n$ in the
log-likelihood function. For $(s,t) \in  \Theta$ and $\delta >0$, we
have
\[
\mathcal{L}(\delta s, \delta t) = \mathcal{L}(s,t)  - n \log \delta  + \left(1-\frac{1}{\delta}\right) \sum_{j=1}^n \frac{x_j}{s+t\kappa_j}.
\]
Therefore, it is natural to substitute
$s=:\delta\cos\vartheta$ and $t=:\delta\sin\vartheta$, where
$\delta>0$, $\vartheta \in (\vartheta_0,\pi/2]$, 
and $\vartheta_0$ is defined by $\tan \vartheta_0 = -1/\max(\kappa_1,\ldots,\kappa_n)$.
For the moment, we temporarily exclude the case $(s,t)=(0,0)$.
Notice that
\[
\frac{\D}{\D \delta} \mathcal{L}(\delta\cos\vartheta , \delta \sin\vartheta) 
= - \frac{n}{\delta} + \frac{1}{\delta^2} \sum_{j=1}^n \frac{x_j}{\cos\vartheta+\kappa_j\sin\vartheta},
\quad \delta>0.
\]
This derivative vanishes if and only if
\[
\delta = \delta_0(\vartheta)
:= \frac{1}{n} \sum_{j=1}^n \frac{x_j}{\cos\vartheta+\kappa_j\sin\vartheta}.
\]
Assume that $\delta_0(\vartheta)>0$.
Since $\frac{\D}{\D \delta} \mathcal{L}(\delta \cos\vartheta, \delta \sin\vartheta)$ is
positive for $\delta < \delta_0(\vartheta)$ and negative for $\delta > \delta_0(\vartheta)$,
the function
$(0,\infty) \ni \delta \mapsto \mathcal{L}(\delta \cos\vartheta, \delta \sin\vartheta)$ 
attains a unique maximum at $\delta = \delta_0(\vartheta)$.  
Defining
\begin{align}\label{e:L*}
  \mathcal{L}^*(\vartheta) := \max_{\delta \ge 0} \mathcal{L}(\delta \cos \vartheta, \delta \sin \vartheta),
\end{align}
and  substituting
$\delta_0(\vartheta)$ into the definition of $\mathcal{L}(\delta s, \delta t)$,
we obtain 
\begin{align}\label{e:L*2}
  \mathcal{L}^*(\vartheta)=\mathcal{L}(\delta_0(\vartheta)\cos \vartheta,\delta_0(\vartheta)\sin \vartheta).
\end{align}
This remains true if $\delta_0(\vartheta)=0$ or, equivalently, if $x_1=\cdots=x_n=0$.
This case is of no practical relevance and will be excluded in the sequel.
We will maximize $\mathcal{L}$ by first maximizing $\mathcal{L}^*$.
Some properties of $\mathcal{L}^*$ are discussed in Appendix~\ref{appendix3}.

Similarly to \eqref{eh0}, we define a function $h_1$ on $(0,\infty)^n$ by
\begin{align}\label{eh1}
h_1(x_1, \ldots, x_n) &:= \sup\{\mathcal{L}(x_1,\ldots,x_n;s,t):(s,t)\in\Theta\}.
\end{align}
Given $(x_1,\ldots,x_n)$, let $\widehat{\Theta}(x_1, \ldots, x_n)$ denote
the set of all $(s',t')\in\Theta$ such that
\begin{align*}
\mathcal{L}(x_1,\ldots,x_n;s',t')=h_1(x_1, \ldots, x_n).
\end{align*}
By Lemma \ref{lemma1lq}, the set $\widehat{\Theta}(x_1, \ldots, x_n)$
is non-empty.

\begin{remark}\label{runique}\rm Let $(x_1,\ldots,x_n)\in (0,\infty)^n$.
Our numerical analysis and Lemma~\ref{lemma1lq}  strongly suggest that
$\mathcal{L}^*$ has a unique maximizer $\widehat{\vartheta}(x_1,\ldots,x_n)$, 
 that is
\begin{align}\label{eunique}
\big\{\vartheta':\mathcal{L}^* (\vartheta')=\max_{\vartheta_0< \vartheta\le \pi/2} \mathcal{L}^*(\vartheta)\big\}
 =\{\widehat{\vartheta}(x_1,\ldots,x_n)\}.
\end{align}
Then the original likelihood function $\mathcal{L}$ has a unique 
maximizing pair $(\widehat{s},\widehat{t}_1)$ (depending on the data)
given by 
\begin{align}
  \label{eq:s_Lstar}
  \widehat{s} &= \widehat{\delta}\cos\widehat{\vartheta}, \\
  \widehat{t}_1 &= \widehat{\delta}\sin\widehat{\vartheta},
  \label{eq:t1_Lstar}
\end{align}
where
\begin{align*}
\widehat{\delta}:=\delta_0(\widehat\vartheta)=
\frac{1}{n}\sum_{j=1}^n \frac{x_j}{\cos\widehat{\vartheta} +\kappa_j\sin\widehat{\vartheta}}.
\end{align*}
Whenever we use the notation
$\widehat{s}(\cdot)$ and $\widehat{t}_1(\cdot)$,
we implicitly assume  \eqref{eunique}.
\end{remark}

Before discussing the findings of our simulations, we present
two useful invariance properties.

\begin{lemma}\label{linv} Let $t>0$. Then
\begin{align*}
\BP_{0,t}(h_1(X_1,&\ldots,X_n)-h_0(X_1,\ldots,X_n)\in\cdot)\\
&=\BP_{0,1}(h_1(X_1,\ldots,X_n)-h_0(X_1,\ldots,X_n)\in\cdot).
\end{align*}
\end{lemma}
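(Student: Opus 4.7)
The plan is to exhibit a scaling invariance of the difference $h_1-h_0$ together with the obvious scaling property of the distribution of $(X_1,\ldots,X_n)$ under $\BP_{0,t}$.

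First, I would observe the following identity of log-likelihoods: for any $\lambda>0$, any $(s,t)\in\Theta$ and any $(x_1,\ldots,x_n)\in(0,\infty)^n$,
\begin{align*}
\mathcal{L}(\lambda x_1,\ldots,\lambda x_n;\lambda s,\lambda t)
&=\sum_{j=1}^n\left[-\log\big(\lambda s+\lambda t\kappa_j\big)-\frac{\lambda x_j}{\lambda s+\lambda t\kappa_j}\right]\\
&=-n\log\lambda+\mathcal{L}(x_1,\ldots,x_n;s,t).
\end{align*}
Since $\Theta$ is a cone in the $(s,t)$-plane, the map $(s,t)\mapsto(\lambda s,\lambda t)$ is a bijection of $\Theta$ onto itself. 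Taking suprema on both sides (over $\Theta$ for $h_1$ and over $\Theta_0$ for $h_0$, noting that $\Theta_0=\{s=0\}\cap\Theta$ is also a cone and hence preserved) yields
\begin{align*}
h_i(\lambda x_1,\ldots,\lambda x_n)=h_i(x_1,\ldots,x_n)-n\log\lambda,\quad i\in\{0,1\}.
\end{align*}
Consequently, the function $(x_1,\ldots,x_n)\mapsto h_1(x_1,\ldots,x_n)-h_0(x_1,\ldots,x_n)$ is invariant under joint positive rescaling of its arguments.

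Second, I would use the distributional scaling of the data. Under $\BP_{0,t}$, the variables $X_1,\ldots,X_n$ are independent exponential with $\BE_{0,t}(X_j)=t\kappa_j$, so writing $Y_j:=X_j/t$ we have that $(Y_1,\ldots,Y_n)$ under $\BP_{0,t}$ is distributed as $(X_1,\ldots,X_n)$ under $\BP_{0,1}$. Combining this with the scale invariance of $h_1-h_0$ gives, for every Borel set $B\subset\R$,
\begin{align*}
\BP_{0,t}\big(h_1(X_1,\ldots,X_n)-h_0(X_1,\ldots,X_n)\in B\big)
&=\BP_{0,t}\big(h_1(tY_1,\ldots,tY_n)-h_0(tY_1,\ldots,tY_n)\in B\big)\\
&=\BP_{0,t}\big(h_1(Y_1,\ldots,Y_n)-h_0(Y_1,\ldots,Y_n)\in B\big)\\
&=\BP_{0,1}\big(h_1(X_1,\ldots,X_n)-h_0(X_1,\ldots,X_n)\in B\big),
\end{align*}
which is the claim.

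There is no real obstacle; the only thing to check carefully is that both $\Theta$ and $\Theta_0$ are invariant under the dilation $(s,t)\mapsto(\lambda s,\lambda t)$ for $\lambda>0$, which is immediate from the explicit description of $\Theta$ given before the statement of Lemma~\ref{lemma1lq}. One should also implicitly invoke the measurability of $h_0$ and $h_1$ (which follows because each is the supremum of a jointly measurable function over a separable parameter set), so that the probabilities above are well defined.
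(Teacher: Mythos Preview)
Your proof is correct and follows essentially the same approach as the paper: both exploit the scaling identity $\mathcal{L}(\lambda x;\lambda s,\lambda t)=\mathcal{L}(x;s,t)-n\log\lambda$ together with the cone property of $\Theta$ (and $\Theta_0$) to obtain scale invariance of $h_1-h_0$, and then combine this with the distributional identity $\BP_{0,t}((t^{-1}X_1,\ldots,t^{-1}X_n)\in\cdot)=\BP_{0,1}((X_1,\ldots,X_n)\in\cdot)$. Your version is in fact slightly more explicit than the paper's, since you spell out that $\Theta_0$ is also preserved by dilations and mention the measurability of $h_0,h_1$.
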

\begin{proof} The parameter space $\Theta$ is invariant under positive scaling.
Furthermore, the log-likelihood function
is essentially scale-invariant, since
\begin{align*}  
\mathcal{L}(r x_1, \ldots, r x_n; r s, r t) = \mathcal{L}(x_1, \ldots, x_n; s, t) - n \log r,
\end{align*}
for any $r>0$, any $(s,t)\in\Theta$, and any
$(x_1, \ldots, x_n)\in (0,\infty)^n$. It follows that
\begin{align*}
h_1(rx_1,\ldots,rx_n)-h_0(rx_1,\ldots,rx_n)=h_1(x_1,\ldots,x_n)-h_0(x_1,\ldots,x_n).
\end{align*}
Now, set $r:=t^{-1}$, and note the distributional
identity
\begin{align*}
\BP_{0,t}\big( (t^{-1}X_1,\ldots,t^{-1}X_n)\in\cdot\big)=\BP_{0,1}\big( (X_1,\ldots,X_n)\in\cdot\big),
\end{align*}
which completes the proof.
\end{proof}

If assumption \eqref{eunique} holds, the following
 corollary implies that
\begin{align*}
\BP_{0,t}\big( \widehat{s}(X_1,\ldots,X_n)=0\big) = \BP_{0,1}\big( \widehat{s}(X_1,\ldots,X_n)=0\big),
\quad t>0.
\end{align*}

\begin{figure}[t]
  \centering
  \includegraphics[width=\linewidth]{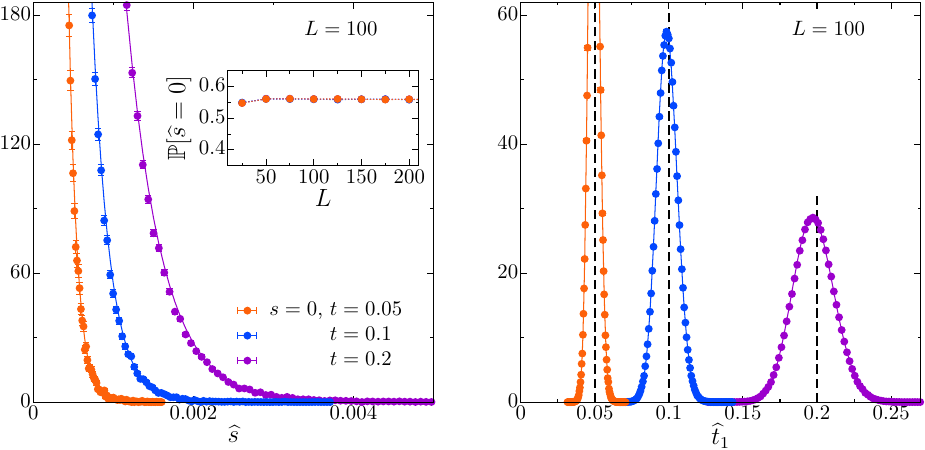}
  \caption{
    Density estimates for the continuous part of $\widehat{s}$ and for $\widehat{t}_1$ (bullets) and
  densities of gamma distributions with the same expectation and
  variance (solid lines). The inset shows the size of the atom of
  $\widehat{s}$ as a function of $L$.}
  \label{histograms_H0_t1}
\end{figure}

\begin{figure}[t]
  \centering
  \includegraphics[width=\linewidth]{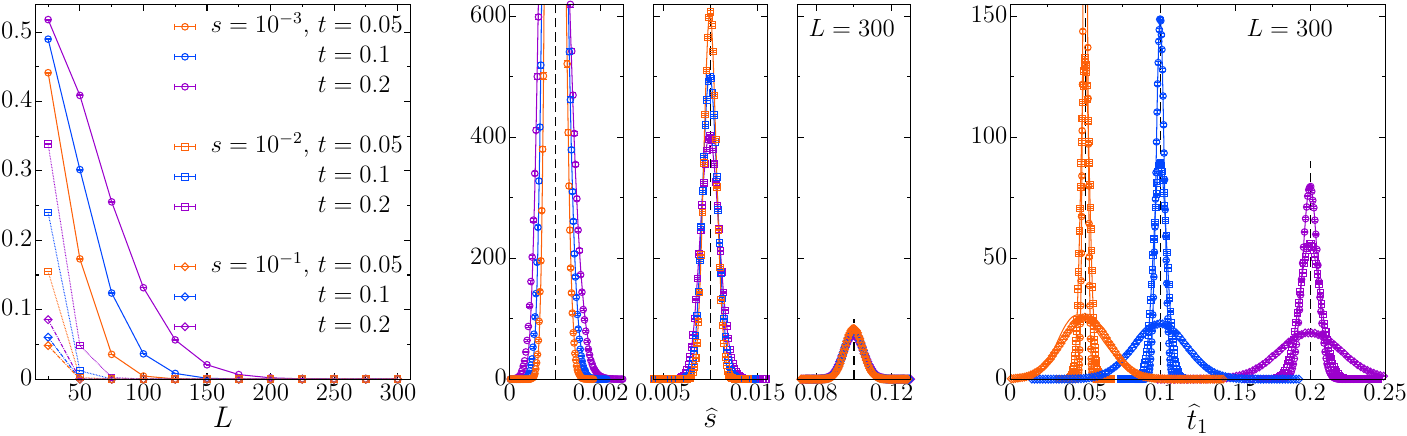}
  \caption{
  The non-hyperuniform case: (left) the size of the atom of
  $\widehat{s}$; (center and right) density estimates for
  $\widehat{s}$  and $\widehat{t}_1$ (bullets) and densities of gamma
  distributions with the same expectation and variance (solid lines).}
  \label{histograms_H1}
\end{figure}

\begin{corollary}\label{cinv} Let $t>0$. Then
\begin{align*}
\BP_{0,t}\big( \widehat{\Theta}(X_1,\ldots,X_n)\cap\Theta_0=\emptyset\big)
=\BP_{0,1} \big(\widehat{\Theta}(X_1,\ldots,X_n)\cap\Theta_0=\emptyset \big).
\end{align*}
\end{corollary}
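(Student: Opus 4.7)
The plan is to reduce the corollary to Lemma \ref{linv} by showing that the event $\{\widehat{\Theta}(X_1,\ldots,X_n)\cap\Theta_0=\emptyset\}$ coincides with the event $\{h_1(X_1,\ldots,X_n)-h_0(X_1,\ldots,X_n)>0\}$, which depends only on the difference $h_1-h_0$ whose distribution is already established to be invariant under the rescaling $t\mapsto 1$.

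First I would establish the pointwise equivalence
\begin{align*}
\widehat{\Theta}(x_1,\ldots,x_n)\cap\Theta_0=\emptyset \quad\Longleftrightarrow\quad h_1(x_1,\ldots,x_n) > h_0(x_1,\ldots,x_n)
\end{align*}
for every $(x_1,\ldots,x_n)\in(0,\infty)^n$. Since $\Theta_0\subset\Theta$, we always have $h_0\le h_1$, so this amounts to ruling out the equality case. For the direction ``$\Leftarrow$'', if $h_1>h_0$ then every $(0,t)\in\Theta_0$ satisfies $\mathcal{L}(x_1,\ldots,x_n;0,t)\le h_0<h_1$, so no point of $\Theta_0$ can belong to $\widehat{\Theta}$. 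For ``$\Rightarrow$'', I would use the crucial fact established in Section~\ref{estimateH0} that the sup defining $h_0$ is actually attained at $(0,\widehat{t}_0)$; hence if $h_1=h_0$, the point $(0,\widehat{t}_0)$ lies in $\widehat{\Theta}\cap\Theta_0$, contradicting the assumption that the intersection is empty.

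Given this equivalence, the corollary follows immediately: the event in question is a measurable function of $h_1(X_1,\ldots,X_n)-h_0(X_1,\ldots,X_n)$, and Lemma~\ref{linv} asserts that this random variable has the same distribution under $\BP_{0,t}$ as under $\BP_{0,1}$, whence
\begin{align*}
\BP_{0,t}(h_1-h_0>0)=\BP_{0,1}(h_1-h_0>0).
\end{align*}

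The only delicate step is the pointwise equivalence, and it hinges entirely on the fact that $h_0$ is attained (as opposed to merely a supremum). This is not problematic here, since $\widehat{t}_0$ was shown in Section~\ref{estimateH0} to be the explicit unique maximizer. No appeal to Remark~\ref{runique} is needed: we do not require uniqueness of the maximizer of the full likelihood, only attainment of the restricted one.
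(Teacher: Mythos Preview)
Your proposal is correct and follows essentially the same route as the paper's proof: both establish the pointwise equivalence $\widehat{\Theta}(x_1,\ldots,x_n)\cap\Theta_0=\emptyset \Leftrightarrow h_1(x_1,\ldots,x_n)>h_0(x_1,\ldots,x_n)$ by exploiting that the restricted supremum $h_0$ is attained at $(0,\widehat{t}_0)$, and then invoke Lemma~\ref{linv}. Your explicit observation that no appeal to Remark~\ref{runique} is needed is accurate and matches the paper's argument.
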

\begin{proof} Let $(x_1, \ldots, x_n)\in(0,\infty)^n$, and suppose that 
$\widehat{\Theta}(x_1,\ldots,x_n)\cap\Theta_0\ne\emptyset$.
Then, by definition, we immediately obtain 
$h_0(x_1,\ldots,x_n)=h_1(x_1,\ldots,x_n)$. Conversely, assume that
the above identity holds. As shown earlier, there exists
some $\widehat{t}_0>0$ such that $\mathcal{L}(x_1,\ldots,x_n;0,\widehat{t}_0)=h_0(x_1,\ldots,x_n)$.
Hence, $(0,\widehat{t}_0)\in \widehat{\Theta}(x_1,\ldots,x_n)$, implying that
$\widehat{\Theta}(x_1,\ldots,x_n)\cap\Theta_0\ne\emptyset$.
The assertion then follows directly from Lemma \ref{linv}.
\end{proof}

We numerically optimize $\mathcal{L}(x_1,\ldots,x_n;s,t)$
to machine precision, using a tolerance of about $10^{-14}$.
Such high accuracy is essential for reliably studying the
$\BP_{s,t}$-distribution of $\widehat{s}(X_1,\ldots,X_n)$,
especially since the density of its continuous part diverges as $s\to 0$,
as suggested by simulations.

\begin{figure}[t]
  \centering
  \includegraphics[width=\linewidth]{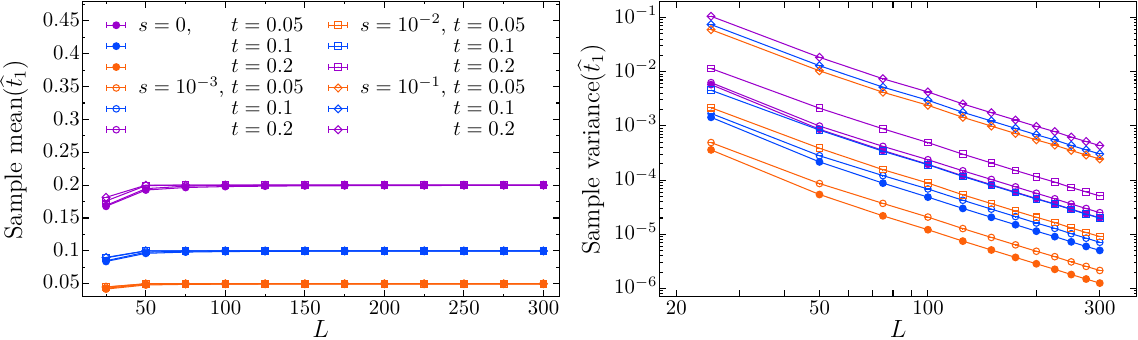}
  \caption{Asymptotic unbiasedness and consistency of $\widehat{t}_1$
   are illustrated by plots of the sample mean value (left) and variance (right)
  as functions of the linear system size $L$.}
  \label{estimators_H1_t}
\end{figure} 

\begin{figure}[t]
  \centering
  \includegraphics[width=\linewidth]{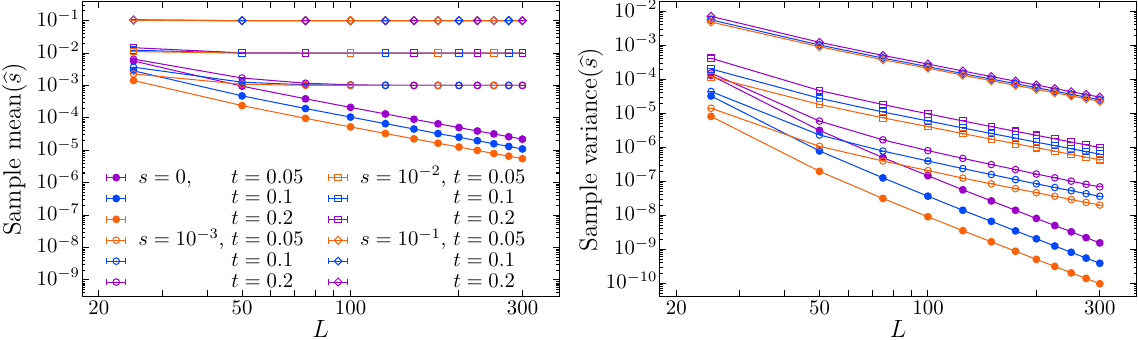}
  \caption{Asymptotic unbiasedness and consistency of $\widehat{s}$
  are demonstrated by plots of the mean value (left) and variance (right)
  as functions of $L$.}
  \label{estimators_H1_s}
\end{figure}

Using the commensurable wave vectors $K_r$ from
\eqref{eq:reciprocal}, i.e., the same wave vectors as for Figures~\ref{estimators_H0}
and \ref{histograms_H0_t0}, we repeat the optimization for $10^6$
independent simulations of the vector of $(X_1,\ldots,X_n)$ for each
choice of $s$, $t$, and $L$. We then approximate the densities of
$\widehat{s}(X_1,\ldots,X_n)$ and $\widehat{t}_1(X_1,\ldots,X_n)$ using
histograms of the simulation results. See Figures~\ref{histograms_H0_t1}
and \ref{histograms_H1} for simulations with $s=0$ and $s>0$,
respectively. The inset in Figure~\ref{histograms_H0_t1} and the left
panel in Figure~\ref{histograms_H1} illustrate the stabilization of the
atom's mass. Asymptotic unbiasedness and consistency are demonstrated in
Figures~\ref{estimators_H1_t} and \ref{estimators_H1_s}.

{Our simulations strongly suggest the following asymptotic behavior
of our estimators:
\begin{itemize}
\item The estimators are consistent and asymptotically
    unbiased.
\item The asymptotic distribution  of
    $\widehat{s}(X_1,\ldots,X_n)$, as $L \to \infty$, includes an atom at  the origin.
    The atom mass $\BP_{s,t}(\widehat{s}(X_1,\ldots,X_n)=0)$
    converges to zero if $s>0$, and to a constant
   (independent of $t$) if $s=0$.
    This $t$-independence follows from Corollary \ref{cinv}.
\item
 As $L\to\infty$, the continuous parts of the distributions of both
    $\widehat{s}(X_1,\ldots,X_n)$ and $\widehat{t}_1(X_1,\ldots,X_n)$
    converge to  gamma distributions.
For $s=0$, these gamma approximations are already accurate at relatively small system sizes.
\end{itemize}

\section{Likelihood ratio test for hyperuniformity}\label{seclratio}

\begin{figure}[t]
  \centering
  \includegraphics[width=\linewidth]{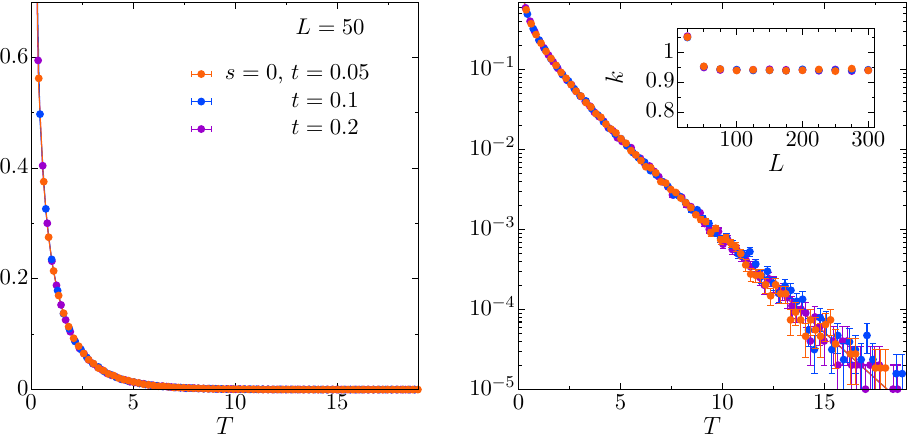}
  \caption{Density estimates for the continuous part of $T$ 
    (bullets) in our asymptotic framework, i.e., for exponentially
      distributed and independent values of $S(k)$ with $s=0$, and
      densities of a $\chi^2$-distribution with the same expectation
      (solid lines). A linear plot is shown on the left-hand side, and a
      log-log plot on the right-hand side. The inset shows the {\em
      fractional} parameter $k$
      of the $\chi^2$-distribution as a function of $L$.}
  \label{fig:exp}
\end{figure}

In this section, we adopt the framework established in Section \ref{secML}.
We define the test statistic
\begin{align}
  T(x_1, \dots, x_n) := 2[h_1(x_1, \dots, x_n) - h_0(x_1, \dots, x_n)], \quad
(x_1,\ldots,x_n)\in(0,\infty)^n,
\end{align}
where $h_0$ and $h_1$ are defined in \eqref{eh0} and \eqref{eh1}, respectively.
By construction, $T$ is nonnegative.
Under our hypothesis \eqref{eunique}, we have $T(x_1,\ldots,x_n)=0$
if and only if $\widehat{s}(x_1,\ldots,x_n)=0$. Hence,
the $\BP_{s,t}$-atom of $\widehat{s}(X_1,\ldots,X_n)$
carries over to the distribution of $T(X_1,\ldots,X_n)$.

\begin{figure}[t]
  \centering
  \includegraphics[width=\linewidth]{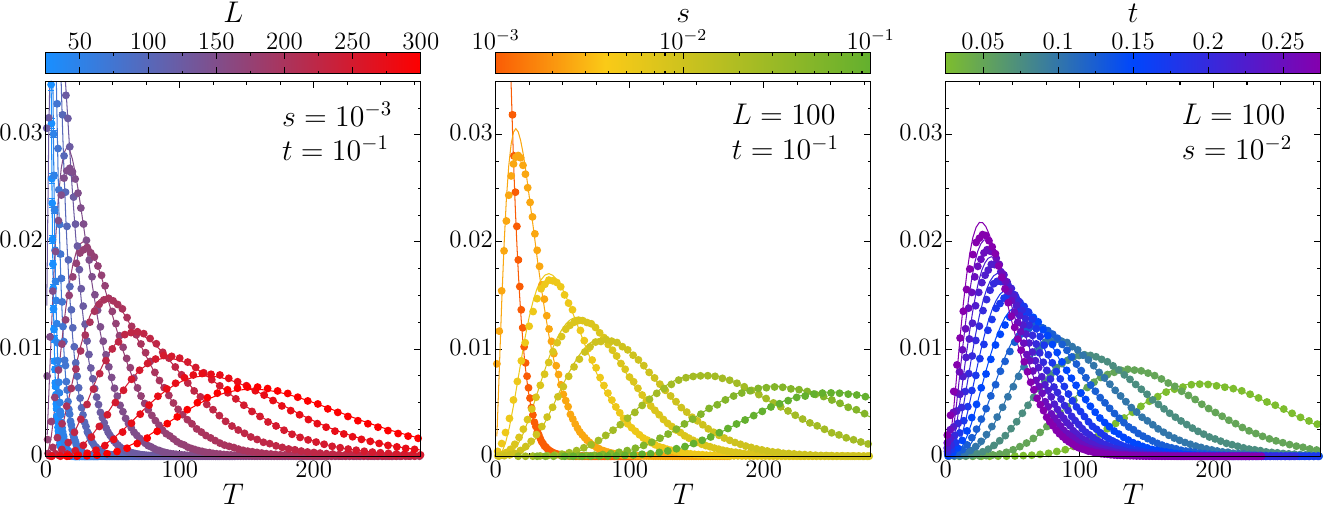}
  \caption{Density estimates for the continuous part of $T$ in our
    asymptotic framework, but now with
  different values of $s$, $t$, and $L$.}
  \label{fig:nonhyp}
\end{figure}

Figure~\ref{fig:exp} presents the simulations results for the test
statistic $T$, specifically showing the atom size and density estimates for the
continuous part of $T$. The same set of samples is used to compute $T$ for
Fig.~\ref{fig:exp} and $(\widehat{s},\widehat{t}_1)$ for
Fig.~\ref{histograms_H0_t1}.

Based on the simulation results shown in Fig.~\ref{fig:exp}, we
conjecture that, as $n \to \infty$, the limiting distribution of $T$
under $H_0$ is a mixture of an atom at $0$ and a gamma distribution with
parameters $a=k/2$ and $b=1/2$, where $k$ is slightly less than $1$. For
$k=1$, this would correspond to a $\chi^2$-distribution with one degree
of freedom. In our case, we refer to this limiting form as a
$\chi^2$-distribution with  {\em fractional} parameter $k$.
Therefore, we assume in the following that the limiting
distribution of $T$ consists of an atom of size 0.5585 and a continuous
part with fractional parameter $k=0.9400$. Importantly, this
approximation holds well even for small system sizes, and it
implies that the asymptotic distribution of $T$ does not depend on $t$.
In fact, this $t$-independence follows directly from Lemma \ref{linv}.
The existence of an universal (i.e., $t$-independent) explicit limit
distribution for $T$ is crucial for our statistical test of
hyperuniformity.

Under $H_1$, the distribution of $T$ is asymptotically (as
$n\to\infty$) well approximated by a gamma distribution; see
Figure~\ref{fig:nonhyp}, which is based on the same simulation settings as in
Section~\ref{sec:fullparameter} but extended to additional values of
$s$, $t$, and $L$. Moreover, we find that the distribution of $T$ can
be well approximated using only a single parameter, i.e., a scatter plot
of $a$ and $b$ collapses onto a single curve.

\section{Testing hyperuniformity in an example}
\label{sectestexample}

We conclude by demonstrating  the practical relevance and remarkable
sensitivity of the novel test through its application to model (v) of matched
points, as introduced  in Section~\ref{asymptsctt}. This model is
particularly advantageous: although it exhibits non-trivial
correlations, it is known to be hyperuniform~\cite{KLY20}. Furthermore,
it allows for efficient simulation of large samples, and the parameter
$t$  can be conveniently tuned by adjusting the intensity $\rho$. While
our simulations focus on two-dimensional systems, the results are, in
principle, applicable in any spatial dimension.

We first verify that the approximations developed in previous sections
remain highly accurate for realistically sized  systems commonly
encountered in applications; see~\cite{Torquato2018} and references
therein. Figure~\ref{fig:pkt} displays the empirical probability density
of the test statistic $T$ for various values of $t$. In all  cases, the
empirical distributions collapse onto the predicted
$\chi^2$-distributions,  within statistical error margins. The insets of
Figure~\ref{fig:pkt} plot the two defining parameters of the limiting
distribution, namely, the atom size and the `fractional degree of
freedom', as functions of the linear system size $L$. The data obtained
from simulations (indicated by bullets) closely match the asymptotic
predictions (represented by horizontal lines).

The specific simulation parameters are as follows. The maximum length of
the wave vectors is adapted to the value of $\rho$, ensuring that the
parabolic approximation remains valid with sufficient accuracy (see the
following two paragraphs for discussion). Specifically, the parameter
$k_<$ in equation \eqref{eq:reciprocal} is set to
0.50 for $\rho=3.0$ (corresponding to $t\approx 0.05$),
0.40 for $\rho=2.5$ ($t\approx 0.07$), and
0.33 for $\rho=2.0$ ($t\approx 0.09$).
These are heuristic choices, and our results are not sensitive to
moderate variations in $k_<$. The test statistic $T$ is computed
independently for each sample. The number of independent realizations
per system size is:
1\,000\,000 for $L=50$,
400\,000 for $L=75$,
200\,000 for $L=100$,
100\,000  for $L=125$,
80\,000  for $L=150$,
60\,000  for $L=175$, and
40\,000  for $L=200$.

An general challenge in any numerical study of hyperuniformity lies in
ensuring that sample sizes are large enough to reveal the asymptotic
scaling behavior of the structure factor (or equivalently, the number
variance). For the matching process in particular, the onset of the
hyperuniform scaling regime depends on the value of $\rho$. As $\rho$
approaches one, the range of wavenumbers over which the hyperuniform
scaling $S(k)\sim k^2$ holds diminishes; see Figure~9~in \cite{KLY20}
and Figure~10 for a corresponding analysis based on the number variance.
Once the system size becomes sufficiently large, we observe a class~I
hyperuniform scaling with $\alpha=2$, consistent with Remark~8.3~in
\cite{KLY20}.

\begin{figure}[t]
  \centering
  \includegraphics[width=\linewidth]{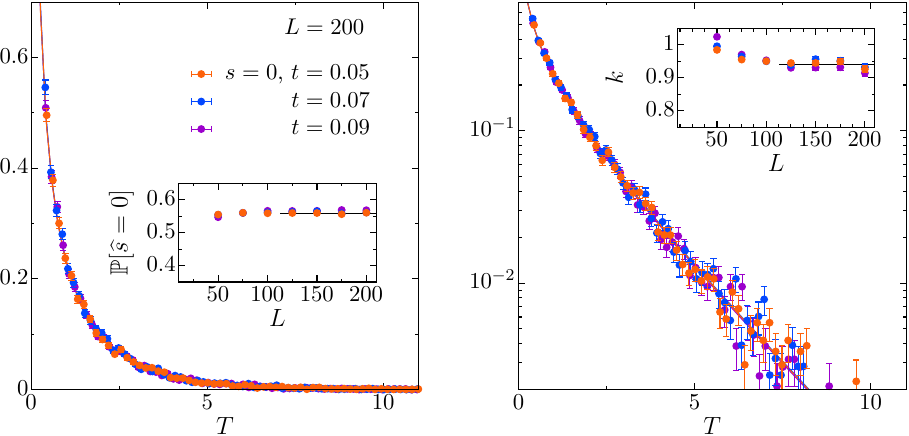}
  \caption{Density estimates for the continuous part of $T$
    (bullets) for matched point patterns, and densities of a
    $\chi^2$-distribution with the same expectation (solid lines).
    A linear plot is shown on the left-hand side, and a log-log plot
    on the right-hand side.
    The insets show the size of the atom of $T$ (left) and
    the {\em fractional} parameter $k$ of the $\chi^2$-distribution as a
    function of $L$. The solid lines in the inset indicate the
    universally chosen parameters for the test, i.e., an atom of size
    0.5585 and a fractional parameter $k=0.9400$.}
  \label{fig:pkt}
\end{figure}

It is important to note that our test remains valid even for smaller
system sizes or larger wavenumber ranges where the structure factor
deviates from the parabolic approximation. In such cases, rejection of
the null hypothesis may occur solely due to the incorrect functional
form of $S(k)$. For this reason, the test should also correctly reject
class II and class III hyperuniform systems, which are characterized by
different asymptotic scaling laws~\cite{Torquato2018}. This limitation
exemplifies a broader statistical issue; rejection of the null
hypothesis does not necessarily imply non-hyperuniformity, but may
rather indicate a departure from a specific scaling assumption. 

Despite these caveats, we find our test to be robust against small
deviations from the approximation \eqref{skparabola}. In fact,
\eqref{skparabola} may not strictly hold for the matching process, since
the underlying lattice introduces anisotropy. Nevertheless, the
simulations strongly suggest that $S(k)$ is effectively isotropic,
except at the wave vectors of the reciprocal lattice of $\Z^d$. As a
result, the distribution of $T$ is indistinguishable from the asymptotic
distribution under the null hypothesis $H_0$, as illustrated in
Figure~\ref{fig:pkt}.

Next, we employ independent thinning to continuously vary the value of
$S(0)$. Given $p\in[0,1]$ and a stationary point process $\eta$, the
$p$-thinning $\eta_p$ of $\eta$ is defined by retaining each point of
$\eta$ independently of each other with probability $p$; see
\cite[Section 5.3]{LastPenrose17}. A straightforward computation shows
that the intensity $\gamma_p$ and the reduced covariance measure
$\beta^!_{2,p}$ of $\eta_p$ are given by $\gamma_p=p\gamma$ and
$\beta^!_{2,p}=p^2\beta^!_2$, respectively. Consequently, if the
structure factor is defined via equation \eqref{SFgeneral}, we obtain
\begin{align}\label{SFthinning}
S_p(k)=1-p +p S(k),\quad k\in\R^d.
\end{align}
This identity holds for all $k$ for which $S(k)$ (and hence $S_p(k)$) is
well defined. 

By thinning we gain fine control over the value of $s:=S(0)$, the
structure factor at the origin. We begin with $s=10^{-4}$, which is an
order of magnitude smaller than typical values encountered in
non-hyperuniform models~\cite{Torquato2018, klatt_universal_2019}, and
increase $s$ up to $0.1$. For each combination of $s$ and system size
$L$, we generate 10\,000 independent realizations of the point process.

\begin{table}
  \renewcommand{\arraystretch}{1.15}
  \centering
  \caption{Power of the novel test for hyperuniformity
  for a range of values of $s$ and $L$. Each test is applied to only a single
  sample of the thinned model (v).}\label{tab:power}
  \begin{tabular}{l|*{7}{c}}
  \hline
  \multicolumn{1}{c|}{\multirow{2}{*}{$s$}} & \multicolumn{7}{c}{$L$} \\
          &       50 &      100 &      150 &      200 &      250 &      300 &      400 \\
  \hline
  0.0     &     0.05 &     0.05 &     0.06 &     0.05 &     0.05 &     0.05 &     0.05 \\
  0.0001  &     0.08 &     0.20 &     0.41 &     0.63 &     0.82 &     0.93 &     0.99 \\
  0.00025 &     0.13 &     0.41 &     0.77 &     0.94 &     0.99 &     1.00 &   $\ast$ \\
  0.0005  &     0.20 &     0.69 &     0.96 &     1.00 &   $\ast$ &   $\ast$ &   $\ast$ \\
  0.00075 &     0.27 &     0.84 &     0.99 &     1.00 &   $\ast$ &   $\ast$ &   $\ast$ \\
  0.001   &     0.34 &     0.92 &     1.00 &   $\ast$ &   $\ast$ &   $\ast$ &   $\ast$ \\
  0.0025  &     0.67 &     1.00 &   $\ast$ &   $\ast$ &   $\ast$ &   $\ast$ &   $\ast$ \\
  0.005   &     0.88 &   $\ast$ &   $\ast$ &   $\ast$ &   $\ast$ &   $\ast$ &   $\ast$ \\
  0.0075  &     0.94 &   $\ast$ &   $\ast$ &   $\ast$ &   $\ast$ &   $\ast$ &   $\ast$ \\
  0.01    &     0.97 &   $\ast$ &   $\ast$ &   $\ast$ &   $\ast$ &   $\ast$ &   $\ast$ \\
  0.025   &     0.99 &   $\ast$ &   $\ast$ &   $\ast$ &   $\ast$ &   $\ast$ &   $\ast$ \\
  0.05    &     1.00 &   $\ast$ &   $\ast$ &   $\ast$ &   $\ast$ &   $\ast$ &   $\ast$ \\
  0.075   &     1.00 &   $\ast$ &   $\ast$ &   $\ast$ &   $\ast$ &   $\ast$ &   $\ast$ \\
  0.1     &     1.00 &   $\ast$ &   $\ast$ &   $\ast$ &   $\ast$ &   $\ast$ &   $\ast$ \\
  \hline
  \end{tabular}
\end{table}

Table~\ref{tab:power} reports the fraction of samples for which the null
hypothesis is rejected, rounded to two decimal places. These values
provide an empirical measure of the power of the test. Crucially, recall
that the test statistic $T$ is evaluated separately for each
realization, i.e., for single point patterns. An asterisk indicates that
every sample in the corresponding setting leads to  rejection,
signifying power one. The nominal significance level is fixed at $0.05$.

For all simulations, we set $\rho = 3.0$, corresponding to $t\approx
0.05$. The parameter $k_<$, determining the maximum wave vector
length in equation \eqref{eq:reciprocal}, is set to $0.75$, as
before. We observed that modest increases in $k_<$ can enhance the
power of the test while still preserving the significance level.
However, if $k_<$ is chosen too large, the parabolic approximation
underlying the test becomes invalid. 

The first row of Table~\ref{tab:power} confirms that the test maintains
the nominal significance level under the null hypothesis,  within
statistical fluctuations. For any $s>0$, the power increases rapidly
with either the system size $L$ or the magnitude of the structure factor
$s$ at the origin. Notably, even with relatively small samples
containing just 2\,500 points ($L=50$), nearly all realizations are
rejected when $s>10^{-2}$. For the challenging case $s=10^{-4}$, often
regarded as a borderline scenario in physical
applications~\cite{Torquato2018, klatt_universal_2019}, the test still
rejects over $90\%$ of samples at $L=300$, corresponding to 90\,000
points.

\section{Summary of test procedure}

The novel test procedure is summarized below (for details, see the
open‑source implementation and its documentation~\cite{Klatt2025}). As
with any null hypothesis test, the first step is to specify a
significance level, denoted $\zeta$, e.g., 5\%.  

Next, one selects the hyperuniformity scaling exponent $\alpha$ and a
taper function $h$, together with the corresponding wave vectors, based
on the data, such as assumptions about the underlying model or
measurement details. The default choice for $\alpha$ is two: values
smaller than two require long-range correlations, whereas larger values
demand a mechanism that suppresses the leading-order term.  For
simulated data with periodic boundary conditions, a standard choice of
$h$ is the cubic taper function described in Example~\ref{excube},
paired with the commensurable wave vectors $K_L$ defined in
\eqref{eq:reciprocal}.

The maximum wavenumber $k_{<}$ should be chosen at the onset of the
so-called scaling regime of the scattering intensity, i.e., where the
approximation \eqref{skparabola} becomes valid. This value can be read
directly from a log--log plot of $\mathcal{S}(k)$. The test is robust to
the precise choice of this parameter; however, a value that is too small
amplifies statistical fluctuations, while choosing it too large
introduces systematic errors since approximation \eqref{skparabola}
is then violated.

Our implementation~\cite{Klatt2025} proceeds as follows.
\begin{enumerate}
  \item The critical value $T_{c}$ of the test statistic is computed as
    \begin{align*}
      T_{c} := F^{-1}_{\chi^{2},k}\left(
        \frac{1-\zeta-\BP_{s,t}(T=0)}{1-\BP_{s,t}(T=0)}
      \right),
    \end{align*}
    where $F^{-1}_{\chi^{2},k}$ denotes the inverse
    $\chi^{2}$-distribution with a fractional parameter $k=0.9400$, and
    the atom of the distribution of $T$, $\mathbb{P}_{s,t}(T=0)=0.5585$,
    is taken into account. For a significance level of $5\%$, this
    yields  $T_{c}\approx 2.382$.

  \item In preparation of the actual test, the data are loaded, the
    parameters are read, and the wave vectors and corresponding
    scattering intensities are computed.

  \item Under the null hypothesis, $\widehat{t}_{0}$ is obtained from
    \eqref{eq:est_null_t}, and the corresponding $h_{0}$ is evaluated.

  \item In the full parameter space, the log-likelihood
    $\mathcal{L}^{*}(\vartheta)$ from \eqref{e:L*2} is maximized,
    yielding $\widehat{s}$, $\widehat{t}_{1}$, and $h_{1}$ as defined in
    \eqref{eh1}, \eqref{eq:s_Lstar}, and \eqref{eq:t1_Lstar}.

  \item The test statistic $T=2(h_{1}-h_{0})$ is evaluated.
\end{enumerate}
The null hypothesis is accepted if $T<T_{c}$ and rejected otherwise.

\section{Final remarks}

Our test relies on three key approximations, each of which becomes valid
in the same thermodynamic limit, that is, for large system sizes. First,
in this limit, the scattering intensity becomes both exponentially
distributed and independent across different wave vectors. Second, the
number of available wave vectors in a neighborhood around the origin
diverges, and the distribution of the test statistic converges to a
specific mixture: a point mass at zero and a gamma distribution. Future
work may establish that this  convergence also holds for finite (but
sufficiently large) sample sizes, provided the number of samples
increases accordingly. Third, in the thermodynamic limit, the
approximation \eqref{skparabola} holds in the following sense: as the
system size grows, wave vectors of smaller magnitude become accessible,
thereby improving the approximation. Accordingly, the range of wave
numbers used for fitting must be adjusted as the sample size increases.

An intriguing direction for future research is to expand the class of
point processes that satisfy the core assumption \eqref{egoodpp}; see
Remark \ref{remgood2}. Additional studies could also provide further
insight into the asymptotic behavior of the maximum likelihood estimator
and the test statistic, as suggested by the simulation results. To our
knowledge, there are no existing results in the literature that directly
address sequences  of independent exponential random variables with
varying means governed by constrained parameters.

We have applied the novel test in extensive simulations to thinnings of the
matching process from \cite{KLY20}. Preliminary results for other point
processes show similarly promising behavior. Finally, we will release an
open-source Python package to support the application of our test to a
wide variety of datasets. We greatly appreciate any feedback and
suggestions.

\section{Appendices}

\subsection{Proof of Proposition \ref{pcov} (asymptotic covariances)} \label{appendix1}

\begin{proof}
We first assume that $k\ne \mathbf{0}$ and $\ell\ne \mathbf{0}$. Without loss of generality, we can
then assume that $k_r\ell_r\ne 0$ for all $r>0$.  Hence, we have
\begin{align*}
\BE T_r(k_r)\overline{T_r(\ell_r)}
&= \frac{1}{r^d}\BE \bigg[\sum_{x,y\in\eta}h_r(x)h_r(y)\te^{-\ti\langle k_r,x\rangle}\te^{\ti\langle \ell_r,y\rangle}\bigg]\\
&= \frac{1}{r^d} \BE \bigg[\sum_{x\in\eta}h_r(x)^2\te^{-\ti\langle k_r,x\rangle}\te^{\ti\langle \ell_r,x\rangle}\bigg]\\
&\quad+ \frac{1}{r^d} \BE\bigg[ \sideset{}{^{\ne}}\sum_{x,y\in\eta}
h_r(x)h_r(y)\te^{-\ti\langle k_r,x\rangle}\te^{\ti\langle \ell_r,y\rangle}\bigg].
\end{align*}
By applying Campbell's formula \eqref{eCampbell} along with \eqref{edisint}, this leads to
\begin{align}\label{e389}\notag
\BE T_r(k_r)\overline{T_r(\ell_r)}&=
\frac{\gamma}{r^d} \int h_r(x)^2\te^{\ti\langle \ell_r-k_r,x\rangle}\,\D x\\
&\quad+ \frac{1}{r^d} \iint h_r(x)h_r(x+y)
\te^{-\ti\langle k_r,x\rangle}\te^{\ti\langle \ell_r,x+y\rangle}\,\D x\,\alpha^!_2(\D y)\\
\notag
& =:C_1(r)+C_2(r),
\end{align}
say. By \eqref{e977}, we obtain
\begin{align*}
\lim_{r\to\infty}C_1(r)=\gamma M_2(h)\I\{k=\ell\}.
\end{align*}
From assumption \eqref{e234}, we also have
\begin{align*}
\lim_{r\to\infty}C_2(r)=\lim_{r\to\infty}C'_2(r),
\end{align*}
where
\begin{align*}
C'_2(r):= \frac{1}{r^d} \iint h_r(x)h_r(x+y)\te^{-\ti\langle k_r,x\rangle}
\te^{\ti\langle \ell_r,x+y\rangle}\,\D x\,\beta^!_2(\D y).
\end{align*}
Rewriting $C'_2(r)$, we obtain 
\begin{align*}
C'_2(r)&= \frac{1}{r^d} \iint h_r(x)h_r(x+y)
\te^{\ti\langle \ell_r-k_r,x\rangle}\te^{\ti\langle \ell_r,y\rangle}\,\D x\,\beta^!_2(\D y)\\
&= \frac{1}{r^d}\iint h_r(x)^2
\te^{\ti\langle \ell_r-k_r,x\rangle}\te^{\ti\langle \ell_r,y\rangle}\,\D x\,\beta^!_2(\D y)\\
&\qquad+ \frac{1}{r^d} \iint h_r(x)(h_r(x+y)-h_r(x))
\te^{\ti\langle \ell_r-k_r,x\rangle}\te^{\ti\langle \ell_r,y\rangle}\,\D x\,\beta^!_2(\D y)\\
& =:D_1(r)+D_2(r),
\end{align*}
say. Furthermore,
\begin{align*}
D_1(r)= \frac{1}{r^d} \int h_r(x)^2\te^{\ti\langle \ell_r-k_r,x\rangle}\,\D x
\int \te^{i\langle \ell_r,y\rangle}\,\beta^!_2(\D y).
\end{align*}
If $(k_r)=(\ell_r)$, then 
\begin{align*}
D_1(r)= M_2(h) \int \te^{i\langle \ell_r,y\rangle}\,\beta^!_2(\D y).
\end{align*}
By assumption \eqref{efinite3} and the dominated convergence theorem,
\begin{align*}
\lim_{r\to\infty}D_1(r)=M_2(h)\gamma(S(k)-1).
\end{align*}
From \eqref{e977}, in all other cases, we obtain $D_1(r)\to 0$ as $r\to\infty$.

We now show that $D_2(r)\to 0$ as $r\to\infty$.
Let $c>0$ be an upper bound for $|h|$.
Then, using \eqref{e239}, we obtain
\begin{align*}
|D_2(r)|\le \frac{c}{r^d} &\iint |h_r(x+y)-h_r(x)|\,\D x\,|\beta^!_2|(\D y)\\
&=c\iint |h(x+r^{-1}y)-h(x)|\,\D x\,|\beta^!_2|(\D y)\\
&\le ca\int r^{-p}\|y\|^p\,|\beta^!_2|(\D y)
+2M_1(h)\int \I\{\|y\|> rb\}\,|\beta^!_2|(\D y). 
\end{align*}
By \eqref{efinite3} and \eqref{efinite}, this upper bound tends
to zero as $r\to\infty$.

It remains to consider  the case where $k_r\equiv0$. Since $\BE T_r(0)=0$
and the centring term of $T_r(0)$ (in the case $\ell_r\equiv 0$) is deterministic
we then have
\begin{align*}
\BE T_r(0)\overline{T_r(\ell_r)}
&= \frac{1}{r^d} \BE\bigg[\sum_{y\in\eta}h_r(y)\sum_{x\in\eta}h_r(x)\te^{\ti\langle \ell_r,x\rangle}\bigg]
-M_1(h)\gamma \BE\bigg[\sum_{x\in\eta}h_r(x)\te^{\ti\langle \ell_r,x\rangle}\bigg]\\
&= \frac{\gamma}{r^d} \int h_r(x)^2\te^{\ti\langle \ell_r,x\rangle}\,\D x
+ \frac{1}{r^d} \iint h_r(x)h_r(x+y)\te^{\ti\langle \ell_r,x\rangle}\,\D x\,\alpha^!_2(\D y)\\
&\quad -M_1(h)\gamma^2 \int h_r(x)\te^{-\ti\langle \ell_r,x\rangle}\,\D x.
\end{align*}
Since $\int h_r(x+y)\,\D y=M_1(h)r^d$, $x\in\R^d$,
we obtain
\begin{align}\label{2078}
\BE T_r(0)\overline{T_r(\ell_r)}&= \frac{\gamma}{r^d} \int h_r(x)^2\te^{\ti\langle \ell_r,x\rangle}\,\D x\\ \notag
&\quad+ \frac{1}{r^d} \iint \te^{\ti\langle \ell_r,x\rangle}h_r(x)h_r(x+y)\,\D x\,\beta^!_2(\D y).
\end{align}
If $\ell_r\equiv 0$, the first term equals $\gamma M_2(h)$. Otherwise, it tends to zero by \eqref{e977}.
Similarly as before, we rewrite the second term as
\begin{align}\label{2079}
&\frac{1}{r^d}\beta^!_2(\R^d) \int \te^{\ti\langle \ell_r,x\rangle}h_r(x)^2\,\D x\\ \notag
&\quad+\frac{1}{r^d} \iint \te^{-\ti\langle \ell_r,x\rangle}h_r(x)(h_r(x+y)-h_r(x))\,\D x\,\beta^!_2(\D y).
\end{align}
Here, the second term tends to $0$ as $r\to\infty$, and so does the first term if $(\ell_r)\ne \mathbf{0}$.
If $(\ell_r)=\mathbf{0}$ the first term equals $M_2(h)\beta^!_2(\R^d)=M_2(h)\gamma(S(0)-1)$.
\end{proof}

\subsection{Proof of Theorem \ref{clt} (central limit theorem)} \label{appendix2}

We adapt the classical methods from \cite{Brillinger72} to our spatial setting..
For a finite non-empty set $M$, let $\Pi_M$ denote the set of all
partitions of $M$. For $n\in\N$ we abbreviate $\Pi_n:=\Pi_{[n]}$.
Let $m\in\N$, and let $X_1,\ldots,X_m$ be complex-valued random variables
satisfying $\BE |X_j|^m<\infty$ for each $j\in[m]$.
The (joint) {\em cumulant} of these random variables is defined
as
\begin{align}\label{ecumu}
\Cum(X_1,\ldots,X_m)=
\sum_{\sigma\in \Pi_m}(-1)^{|\sigma|-1}(|\sigma|-1)!\prod_{I\in\sigma}\BE \prod_{i\in I}X_i.
\end{align}
For real-valued random variables, \eqref{ecumu} is not the definition
but rather a fundamental property; see, e.g., \cite[Proposition 3.2.1]{PeccTaqqu11}.
However, following \cite{Brillinger72}, 
we use \eqref{ecumu} as a natural multilinear extension of the cumulant to
random vectors with complex-valued components.
The inverse relation of \eqref{ecumu} is given by
\begin{align}\label{ecumuinv}
\BE X_1\cdots X_m=\sum_{\sigma\in \Pi_m}\prod_{I\in\sigma}\Cum\big((X_j)_{j\in I}\big).
\end{align}

For any measurable function $f\colon\R^d\to\C$, we define
\begin{align*}
\eta(f):=\sum_{x\in\eta}f(x).
\end{align*}
Let $m\in\N$, and let $f_1,\ldots,f_m\colon\R^d\to\C$ be measurable
and bounded functions. For $I\subset[m]$, we define
$f_I\colon\R^d\to\C$ by $f_I(x):=\prod_{i\in I}f_i(x)$, $x\in\R^d$.
Using \eqref{ecumu} and following the argument presented in the proof of \cite[Lemma 1]{HeinrichSchmidt85},
one can show that
\begin{align}\label{ecumu2}
\Cum\big(\eta(f_1),\ldots,\eta(f_m)\big)
=\sum_{i=1}^m\sum_{\sigma\in\Pi_{m,i}}
\int \prod^i_{j=1}f_{I_j(\sigma)}(x_j)\, \beta_{i}(\D (x_1,\ldots,x_i)),
\end{align}
where we write $\sigma=\{I_1(\sigma),\ldots,I_i(\sigma)\}$ for $\sigma\in\Pi_{m,i}$
and assume that
\begin{align}\label{e:int}
\int \prod^i_{j=1}|f_{I_j(\sigma)}(x_j)|\, |\beta_{i}|(\D (x_1,\ldots,x_i))<\infty,\quad \sigma\in\Pi_{m,i},\,i\le m.
\end{align}
For the benefit of the reader, and since the argument applies to point processes
on a general state space, we outline the key idea behind the proof of \eqref{ecumu2}.
We first assume that $f_1,\ldots,f_m$ vanish outside a bounded set.
It is not difficult to see (see \cite[Exercise 5.4.5]{DaleyVereJones}) that, under this condition, \begin{align*}
\BE \prod_{i\in I}\eta(f_i)=
\sum_{\pi\in \Pi_I}\int\prod_{J\in\pi}f_J(x_J)\,\alpha_{|\pi|}(\D(x_J)_{J\in\pi}).
\end{align*}
Substituting this expression into \eqref{ecumu} yields
\begin{align*}
\Cum\big(\eta(f_1),\ldots,\eta(f_m)\big)&=
\sum_{\sigma\in \Pi_m}(-1)^{|\sigma|-1}(|\sigma|-1)!\\
&\qquad \qquad\times \prod_{I\in\sigma} \sum_{\pi\in \Pi_I}\int\prod_{J\in\pi}f_J(x_J)\,\alpha_{|\pi|}(\D(x_J)_{J\in\pi}).
\end{align*}
We now express the product over $I\in\sigma$ as a sum over
$\pi_1\in \Pi_{I_1(\sigma)},\ldots,\pi_{|\sigma|}\in \Pi_{I_{|\sigma|}(\sigma)}$
and then swap the summation with that over $\sigma$. This reordering results
in a sum over $\pi\in\Pi_m$, followed by a sum
over $\sigma\in\Pi_\pi$. Finally, using definition
\eqref{deffaccum} with $\pi$ instead of $[m]$, we obtain \eqref{ecumu2}.

If some of the $f_1,\ldots,f_m$ do not have a bounded support, we can apply
\eqref{ecumu2} to their restrictions $f^r_1,\ldots,f^r_m$ to the ball $B_r$, $r>0$. By assumption 
\eqref{e:int} and dominated convergence, the corresponding right-hand side of  \eqref{ecumu2} 
converges, as $r\to \infty$, to the right-hand side of  \eqref{ecumu2}.
This remains true if we replace the functions $f_1,\ldots,f_m$ by their absolute values.
But then we can use \eqref{ecumuinv} and monotone convergence too see that
$\BE \eta(|f_j|)^m<\infty$ for each $j\in[m]$. Finally it follows from definition \eqref{ecumu}
and dominated convergence that 
\begin{align*}
\lim_{r\to\infty}\Cum\big(\eta(f^r_1),\ldots,\eta(f^r_m)\big)=\Cum\big(\eta(f_1),\ldots,\eta(f_m)),
\end{align*}
concluding the proof of \eqref{ecumu2} in the general case.

We now consider $m\ge 3$, $r>0$, and $k_1(r),\ldots,k_m(r)\in\R^d$. 
Since the dependence of $k_i(r)$ on $r$ is not essential in the following discussion, we omit it from our notation. 
(In view of \eqref{eclt}, this constitutes  a slight abuse of notation.) 
Applying equation 
\eqref{ecumu2} with the functions $f_j:=h_r\te^{-\ti\langle k_j,\cdot\rangle}$, $j\in[m]$,
the right-hand side of \eqref{ecumu2}, denoted by $A(r)$, simplifies to
\begin{align*}
A(r)=\sum_{i=1}^m\sum_{\sigma\in\Pi_{m,i}}\int \prod^i_{q=1}
\prod_{j\in I_q(\sigma)}h_r(x_q)\te^{-\ti\langle k_j,x_q\rangle}
\, \beta_{i}(\D (x_1,\ldots,x_i)).
\end{align*}
Replacing the integrands by their absolute values and the measures
$\beta_{i}$ by $|\beta_{i}|$, we can use
the integrability of $h$ and the identity
\begin{align*}
|\beta_m|(\cdot)=
\iint\I\{(x,x_1+x,\ldots,x_{m-1}+x)\in \cdot\}\,|\beta^!_m|(\D (x_1,\ldots,x_{m-1}))\,\D x
\end{align*}
to confirm assumption \eqref{e:int}.
Defining $k':=k_1+\cdots+k_m$ and using \eqref{ereducedcum}, we obtain
\begin{align*}
A(r)&=\sum_{i=1}^m\sum_{\sigma\in\Pi_{m,i}}\iint \prod_{j\in I_1(\sigma)}h_r(x)\te^{-\ti\langle k_j,x\rangle} \\
&\qquad \qquad\times\prod^i_{q=2}
\prod_{j\in I_q(\sigma)}h_r(x+x_q)\te^{-\ti\langle k_j,x_q\rangle}\te^{-\ti\langle k_j,x\rangle}
\, \beta^!_i(\D (x_2,\ldots,x_{i}))\,\D x\\
&=\sum_{i=1}^m\sum_{\sigma\in\Pi_{m,i}}\iint \te^{-\ti\langle k',x\rangle}h_r(x)^{|I_1(\sigma)|}\\
&\qquad \qquad\times\prod^i_{q=2}\prod_{j\in I_q(\sigma)}h_r(x+x_q)\te^{-\ti\langle k_j,x_q\rangle}
\, \beta^!_i(\D (x_2,\ldots,x_{i}))\,\D x,
\end{align*}
where the term for $i=1$ should be interpreted appropriately.
Now, suppose $a_1,b_1,\ldots,a_m$, $b_m\in\R$ are bounded in absolute value by some constant $c>1$. It then follows by
induction (using the identity $ab-cd=(a-c)b+(b-d)$) 
\begin{align*}
\Big|\prod^m_{j=1}a_j-\prod^m_{j=1}b_j\Big|
\le  c^{m-1} \sum^m_{j=1}|a_j-b_j|. 
\end{align*}
Letting $c$ be an upper bound of  $|h|$, and defining $c_1:= mc^{m-1}$, we  obtain,
for each $\sigma\in\Pi_{m,i}$, that
\begin{align*}
\Big|h_r(x)^{|I_1(\sigma)|}\prod^i_{j=2}&h_r(x_j+x)^{|I_j(\sigma)|}-h_r(x)^m\Big|\\
&\le m c^{m-1}\sum^i_{j=2}|I_j(\sigma)||h_r(x_j+x)-h_r(x)|\\
&\le c_1\sum^i_{j=2}|h_r(x_j+x)-h_r(x)|.
\end{align*}
It follows that $A(r)=A_1(r)+A_2(r)$,
where
\begin{align*}
A_1(r)&:=\sum_{i=1}^m\sum_{\sigma\in\Pi_{m,i}}\iint \te^{-\ti\langle k',x\rangle}h_r(x)^m\\
&\qquad \qquad\times\prod^i_{j=2}\prod_{i\in I_j(\sigma)}\te^{-\ti\langle k_i,x_j\rangle}
\, \beta^!_n(\D (x_2,\ldots,x_i))\,\D x,
\end{align*}
and $A_2(r)$ satisfies
\begin{align}\label{e714}\notag
|A_2(r)|&\le c_2 \sum^m_{i=2}\sum^i_{j=2} \iint |h_r(x_j+x)-h_r(x)|\,\D x\,|\beta^!_i|(\D (x_2,\ldots,x_i))\\
&= c_2 r^d\sum^m_{i=2}\sum^i_{j=2} \iint |h(r^{-1}x_j+x)-h(x)|\,\D x\,|\beta^!_i|(\D (x_2,\ldots,x_i))
\end{align}
for some $c_2>0$.
By assumption \eqref{Bmixing} and the fact that $\int h(x)^m\,\D x<\infty$, we conclude that
\begin{align}\label{e407}
|A_1(r)|\le c_3r^d,\quad r>0,
\end{align}
for some constant $c_3>0$. Additionally, since $\int |h(z+x)-h(x)|\,\D x\le 2M_1(h)$
for all $z\in\R^d$, the same bound applies to $|A_2(r)|$.
If $k_j\ne 0$ then $T_r(k_j)=r^{-d/2}\eta(f_j)$. Since
the cumulant is homogeneous in each variable,
it follows that
\begin{align}\label{e456}
  \lim_{r\to\infty} \Cum\big(T_r(k_1(r)),\ldots,T_r(k_m(r))\big)=0,\quad m\ge 3.
\end{align}
This remains true if one of the $k_j$ equals $0$, because
the cumulant is translation invariant.

Recall that $T_{r,1}(k)$ denotes the real part and $T_{r,2}(k)$
the imaginary part of $T_r(k)$.
Let us abbreviate $(X'_j,Y'_j,Z'_j):=(T_{r,1}(k_j),T_{r,2}(k_j),T_r(k_j))$. 
By the multilinearity of cumulants, 
$\Cum(X'_1,Z'_2,\ldots,Z'_m)$ and $\Cum(Y'_1,Z'_2,\ldots,Z'_m)$ are linear
functions of $\Cum(Z'_1,\ldots,Z_m')$ and $\Cum(\overline{Z'_1},Z_2,\ldots,Z_m)$.
Let $W_j\in\{X_j',Y_j'\}$ for $j\in[m]$.
By induction,  it follows that $\Cum(W_1,\ldots,W_m)$
is a linear function of cumulants
$\Cum(V_1,\ldots,V_m)$, where $V_j\in\{Z'_j,\overline{Z'_j}\}$.
Since we may apply \eqref{e456} with $k'_j$ in place of $k_j$,
where $k'_j\in\{k_1,-k_1,\ldots,k_m,-k_m\}$,
and because $T_r(-k_j)=\overline{T_r(k_j)}$, we obtain
\begin{align}\label{e555}
\lim_{r\to\infty} \Cum\big(Z_r(k_1(r)),\ldots,Z_r(k_m(r))\big)=0,
\quad k_1,\ldots,k_m\in\R^d,\,m\ge 3,
\end{align}
where $Z_r(k_j)\in\{T_{r,1}(k_\ell),T_{r,2}(k_\ell):\ell\in[m]\}$ for each $j\in[m]$.

With these preliminaries in place, we now turn to the proof of the convergence \eqref{eclt}.
Let
$a_1,b_1\ldots,a_n,b_n$ be real  numbers. By the Cram\'{e}r--Wold device, it suffices to show that
\begin{align*}
Z_r:=\sum^n_{j=1} \big(a_jT_{r,1}(k_{j,r})+b_jT_{r,2}(k_{j,r})\big)
\overset{d}{\longrightarrow}\sum^{n}_{j=1}\big(a_jX_j+b_jY_j\big),\quad \text{as $r\to\infty$},
\end{align*}
where $X_1,Y_1,\ldots,X_n,Y_n$ are independent, normally
distributed random variables with mean $0$ and variances
$\gamma M_2(h)S(k_1)/2,\ldots, \gamma M_2(h)S(k_n)/2$, respectively.  To prove this, we use the method of moments 
(see, e.g., \cite[Section 30]{Billingsley}, or \cite[Section 3]{henze24}).
By equations \eqref{ecumu} and \eqref{ecumuinv},
cumulants and moments are in a one-to-one (polynomial) correspondence,
so  convergence in distribution can be established by showing the convergence of the cumulants.
Specifically, for $m\in\N$,
the $m$-th cumulant of $Z_r$ is defined by
$C_m(Z_r):=\Cum(Z_r,\ldots,Z_r)$, where $Z_r$ appears $m$ times. From \eqref{e555} and multilinearity, it follows  that
\begin{align*}
\lim_{r\to\infty} C_m(Z_r)=0,\quad m\ge 3.
\end{align*}
From \eqref{e3.45} and \eqref{e590}, we further conclude that 
\begin{align*}
\lim_{r\to\infty}\BV(Z_r)=\frac{\gamma}{2}\sum^n_{j=1}(a_j^2+b^2_j)M_2(h)S(k_j),
\end{align*}
which matches the variance of $\sum^{n}_{j=1}(a_jX_j+b_jY_j)$.
It only  remains to invoke once again \eqref{e590} to ensure convergence  of the first moments.

\begin{remark}\rm It is worth noting that the proof of the convergence
\eqref{e456} requires only that $h$ be bounded and integrable,
along with the hypothesis \eqref{Bmixing} regarding the reduced cumulant measures of order $m\ge 3$. 
The conditions 
\eqref{e234} and \eqref{e239} are not needed.
Hence, the CLT \eqref{eclt}  remains valid in a more general setting, 
provided that the asymptotic expectations \eqref{e590} and  covariances \eqref{e:ascov} exist as finite limits.
\end{remark}

\subsection{Properties of the likelihood function} \label{appendix3}

Here we discuss some properties of the likelihood function $\mathcal{L}^*$ defined by 
\eqref{e:L*}. Using \eqref{e:L*2} and 
some straightforward calculations, we obtain
\begin{align}\label{stablell}\notag
\mathcal{L}^*(\vartheta)=
&- \sum_{j=1}^n \log\big(\cos\vartheta + \kappa_j \sin\vartheta \big)  - 
n \log \left(\sum_{j=1}^n \frac{x_j}{\cos\vartheta+\kappa_j \sin\vartheta } \right)\\ 
&\quad + n(\log n - 1).
\end{align}
The derivative of $\mathcal{L}^*$ is given by
\begin{align}\label{lstarderivative}
\frac{\mathrm{d}}{\mathrm{d}\vartheta}\mathcal{L}^*(\vartheta) =
\sum_{j=1}^n \frac{\sin \vartheta - \kappa_j \cos \vartheta}{\cos \vartheta + \kappa_j \sin \vartheta} + n \cdot \frac{\displaystyle{\sum_{j=1}^n} \displaystyle{\frac{x_j(-\sin \vartheta + \kappa_j \cos \vartheta)}{(\cos \vartheta + \kappa_j \sin \vartheta)^2}}}{\displaystyle{\sum_{j=1}^n} \displaystyle{\frac{x_j}{\cos \vartheta + \kappa_j \sin \vartheta}} }.
\end{align}

Regarding the limits of ${\cal L}^*(\vartheta)$ as $\vartheta \uparrow \pi/2$ and $\vartheta \downarrow \vartheta_0$, we have the following result: 

\begin{lemma}\label{lemma1lq}
\begin{enumerate}
\item[a)] $\displaystyle{ \lim_{\vartheta \uparrow \pi/2} {\mathcal{L}^*}(\vartheta)  = - \sum_{j=1}^n \log \kappa_j - n \log 
\left(\sum_{j=1}^n \frac{x_j}{\kappa_j}\right)} + n(\log n -1)$,\\
\item[b)] If $\kappa_1 = \ldots = \kappa_n$, then
$\displaystyle{\lim_{\vartheta \downarrow \vartheta_0} {\mathcal{L}^*}(\vartheta)  =  - n \log \left(\sum_{j=1}^n x_j\right) + n(\log n -1)}$,
\item[c)] If at least two of the $\kappa_j$ differ from each other, then
$\displaystyle{\lim_{\vartheta \downarrow \vartheta_0} {\mathcal{L}^*}(\vartheta) =    - \infty}$.
\end{enumerate}
\end{lemma}
\begin{proof}  Part a) follows readily from \eqref{stablell}, since 
$\cos \frac{\pi}{2} =0$ and $\sin \frac{\pi}{2} =1$.  To prove b), let
$\kappa := \kappa_1 = \ldots = \kappa_n$. Using $\log \frac{a}{b} = \log a - \log b$, we obtain 
\begin{align*}
{\mathcal{L}^*}(\vartheta)  & 
=n \log(\cos \vartheta + \kappa \sin \vartheta) - n \log \left( \sum_{j=1}^n \frac{x_j}{\cos \vartheta + \kappa \sin \vartheta}\right) + n (\log n - 1)\\
& =- n \log \left( \sum_{j=1}^n x_j\right) + n\log n -1),
\end{align*}
proving the assertion.
To show c), assume without loss of generality that the $\kappa_j$ are ordered so that  $\kappa_1 \le \ldots \le \kappa_n$. Moreover, suppose
$\kappa_1\le  \ldots \le  \kappa_\ell <\kappa_{\ell +1} = \ldots = \kappa_n$ for some $\ell \in \{1,\ldots,n-1\}$. Setting 
$t:= 1+ \kappa_n \tan \vartheta$, the limit
$\vartheta \downarrow \vartheta_0$ corresponds to $t \downarrow 0$. Starting from \eqref{stablell} and using the  definitions of $\ell$ and $t$, we obtain
\begin{align*}
 {\mathcal L}^*(\vartheta) & =   - \sum_{j=1}^\ell \log(\cos \vartheta + \kappa_j \sin \vartheta) - (n-\ell) \log (t \cos \vartheta)\\
 &\quad - n \log\left(\frac{1}{t} \bigg\{ t \sum_{j=1}^\ell \frac{x_j}{\cos \vartheta + \kappa_j \sin \vartheta} + \frac{1}{\cos \vartheta} \sum_{j=\ell +1}^n x_j \bigg\}  \right)
+ n(\log n -1).
\end{align*} 
Writing $O(1)$ for a term that is bounded as $t \downarrow 0$, and using the identity $\log(ab) = \log a + \log b$, it follows that 
\[
{\mathcal L}^*(\vartheta) = \ell \log t + O(1) \quad \text{as} \ t \downarrow 0.
\]
Since $\ell \ge 1$, the proof is completed.
\end{proof}
   
Although we do not have further mathematical results on the behavior of
the function $\mathcal{L}^*$, there is strong heuristic evidence that
$\mathcal{L}^*$ has a unique maximizer $\widehat{\vartheta} \in
(\vartheta_0, \pi/2]$, at least for the classes of problems that
arise in typical applications. 
Moreover, this maximizer is equal to $\pi /2$ if the left-hand derivative of
$\mathcal{L}^*$ at $\pi /2$} is positive, and negative
if this left-hand derivative is positive. 
From \eqref{lstarderivative}, it follows that 
\begin{align*}
  ({\mathcal L}^*)'\left( \frac{\pi}{2}-\right) := 
\lim_{\vartheta    \uparrow \pi /2}
  \frac{\mathrm{d}}{\mathrm{d}\vartheta}\mathcal{L}^*(\vartheta) =
  \sum_{j=1}^n \frac{1}{\kappa_j} - n \cdot
  \frac{\displaystyle{\sum_{j=1}^n} \displaystyle{
      \frac{x_j}{\kappa_j^2}}}{\displaystyle{\sum_{j=1}^n}
    \displaystyle{ \frac{x_j}{\kappa_j}}},
\end{align*}
which implies
\begin{align*}
({\cal L}^*)'\left( \frac{\pi}{2}-\right) > 0  \Longleftrightarrow 
\sum_{j=1}^n \frac{x_j}{\kappa_j} \left(\frac{1}{n}\sum_{i=1}^n \frac{1}{\kappa_i} - \frac{1}{\kappa_j} \right) > 0.
\end{align*}

\subsection*{Acknowledgments}

We thank two anonymous referees, the authors of \cite{HGBL22}, Philipp
Neumann, D. Yogeshwaran and Joe Yukich for many helpful comments.
This research was supported by the Initiative and Networking Fund of the
Helmholtz Association through the Project ``DataMat'', as well as by the
Deutsche Forschungsgemeinschaft (DFG, German Research Foundation) as
part of the DFG priority program `Random Geometric Systems' SPP 2265)
under grants KL 3391/2-2, LA 965/11-1/2, LO 418/25-1, ME 1361/16-1, and
WI 5527/1-1, and by the Volkswagenstiftung via the `Experiment' Project
`Finite Projective Geometry'.

\end{document}